
\documentclass[a4paper, 10pt, journal]{IEEEtran}

\IEEEoverridecommandlockouts                              



\usepackage[T1]{fontenc}
\usepackage[utf8]{inputenc}
\usepackage[ngerman,english]{babel}
\usepackage{csquotes}


\usepackage{amsmath,amssymb,mathtools}
\usepackage{units}
\usepackage{xspace}

\usepackage{xcolor}

\usepackage{acronym}
\acrodef{mm}[MM]{minimax}
\acrodef{mg}[MG]{microgrid}
\acrodef{pv}[PV]{photovoltaic}
\acrodef{ems}[EMS]{energy management system}
\acrodef{mpc}[MPC]{model predictive control}
\acrodef{der}[DER]{distributed energy resource}
\acrodef{res}[RES]{renewable energy systems}
\acrodef{mppt}[MPPT]{maximum power point tracking}
\acrodef{soc}[SOC]{state of charge}
\acrodef{iff}[iff]{if and only if}

\usepackage{booktabs}
\usepackage{tabularx}

\hyphenation{look-up}

\newcommand{\hh}{\nobreak\hspace{0pt}-\hspace{0pt}}

\newcommand{\ie}{i.\,e.\@\xspace}

\newcommand{\eg}{e.\,g.\@\xspace}

\newcommand{\wrt}{w.\,r.\,t.\@\xspace}

\providecommand{\st}{}
\renewcommand{\st}{s.\,t.\@\xspace}
\newcommand{\wolg}{w.\,l.\,o.\,g.\@\xspace}
\newcommand{\Wolg}{W.\,l.\,o.\,g.\@\xspace}

\newcommand{\parenTwoLine}[2]{#1\vphantom{#2}\right.\\ \left.\vphantom{#1}#2}

\newcommand{\T}{^\mathsf{T}}

\newcommand{\Npr}{N_p}
\newcommand{\Nsi}{N_s}

\newcommand{\RR}{\mathbb{R}}
\newcommand{\N}{\mathbb{N}}
\newcommand{\Rp}{\mathbb{R}_{>0}}
\newcommand{\Rne}{\mathbb{R}_{<0}}
\newcommand{\Rpz}{\mathbb{R}_{\geq0}}
\newcommand{\Rnz}{\mathbb{R}_{\leq0}}
\newcommand{\Npz}{\mathbb{N}_{\geq0}}

\newcommand{\tmin}{\text{min}}
\newcommand{\tmax}{\text{max}}

\newcommand{\ttot}{\text{tot}}
\newcommand{\tst}{\text{s}}
\newcommand{\tth}{\text{t}}
\newcommand{\trs}{\text{r}}
\newcommand{\tld}{\text{d}}
\newcommand{\ok}{(k)}

\newcommand{\okk}{(k+1)}
\newcommand{\okm}{(k-1)}
\newcommand{\okj}{(k+j)}

\newcommand{\okjm}{(k+j-1)}
\newcommand{\okNp}{(k+\Npr)}

\newcommand{\odo}{(\cdot)}

\newcommand{\inOneNp}{\in\mathbb{N}_{[1,\Npr]}}

\newcommand{\jInOneNp}{j\inOneNp}

\newcommand{\sca}{^{(1)}}
\newcommand{\scb}{^{(2)}}

\newcommand{\Ts}{\mathrm{T}_\text{s}}

\newcommand{\pk}{p\ok}
\newcommand{\uk}{u\ok}
\newcommand{\pmin}{p^\tmin}
\newcommand{\pmax}{p^\tmax}
\newcommand{\umin}{u^\tmin}
\newcommand{\umax}{u^\tmax}

\newcommand{\wmin}{w^\tmin}
\newcommand{\wmax}{w^\tmax}

\newcommand{\ptox}{\tilde{p}_\ttot}
\newcommand{\pth}{p_\tth}
\newcommand{\pst}{p_\tst}
\newcommand{\prs}{p_\trs}
\newcommand{\prwind}{p_{\trs,1}}
\newcommand{\prpv}{p_{\trs,2}}
\newcommand{\uth}{u_\tth}
\newcommand{\ust}{u_\tst}
\newcommand{\urs}{u_\trs}
\newcommand{\urwind}{u_{\trs,1}}
\newcommand{\urpv}{u_{\trs,2}}
\newcommand{\wrs}{w_\trs}
\newcommand{\wld}{w_\tld}
\newcommand{\pstv}{\tilde{p}_\tst}

\newcommand{\chirwind}{\chi_{\trs,1}}
\newcommand{\chirpv}{\chi_{\trs,2}}

\newcommand{\ptk}{\pth\ok}
\newcommand{\ptkj}{\pth\okj}

\newcommand{\psk}{\pst\ok}
\newcommand{\pskj}{\pst\okj}

\newcommand{\prk}{p_\trs\ok}
\newcommand{\utk}{u_\tth\ok}
\newcommand{\usk}{u_\tst\ok}
\newcommand{\urk}{u_\trs\ok}
\newcommand{\wk}{w\ok}

\newcommand{\wrk}{w_\trs\ok}
\newcommand{\wlk}{w_\tld\ok}
\newcommand{\ptik}{p_{\tth,i}\ok}

\newcommand{\wmink}{\wmin\ok}
\newcommand{\wmaxk}{\wmax\ok}

\newcommand{\ptmin}{p_\tth^\tmin}
\newcommand{\ptmax}{p_\tth^\tmax}
\newcommand{\psmin}{p_\tst^\tmin}
\newcommand{\psmax}{p_\tst^\tmax}
\newcommand{\psminn}{\bar{p}_\tst^\tmin}
\newcommand{\psmaxx}{\bar{p}_\tst^\tmax}
\newcommand{\psmink}{\psminn\ok}
\newcommand{\psmaxk}{\psmaxx\ok}
\newcommand{\prmin}{p_\trs^\tmin}

\newcommand{\prwindmin}{\prwind^\tmin}

\newcommand{\prpvmin}{\prpv^\tmin}

\newcommand{\utmin}{u_\tth^\tmin}
\newcommand{\utmax}{u_\tth^\tmax}
\newcommand{\usmin}{u_\tst^\tmin}
\newcommand{\usmax}{u_\tst^\tmax}

\newcommand{\urwindmin}{\urwind^\tmin}
\newcommand{\urwindmax}{\urwind^\tmax}
\newcommand{\urpvmin}{\urpv^\tmin}
\newcommand{\urpvmax}{\urpv^\tmax}

\newcommand{\pimin}{p_i^\tmin}
\newcommand{\pimax}{p_i^\tmax}

\newcommand{\uimin}{u_i^\tmin}
\newcommand{\uimax}{u_i^\tmax}

\newcommand{\xsmin}{x^\tmin}
\newcommand{\xsmax}{x^\tmax}

\newcommand{\deltath}{\delta_\tth}
\newcommand{\deltati}{\delta_{\tth,i}}
\newcommand{\deltatk}{\deltath\ok}
\newcommand{\deltatkm}{\deltath\okm}
\newcommand{\deltatkj}{\deltath\okj}
\newcommand{\deltatkjm}{\deltath\okjm}
\newcommand{\deltatik}{\deltati\ok}

\newcommand{\pVec}{\mathbf{p}}
\newcommand{\uVec}{\mathbf{u}}
\newcommand{\wVec}{\mathbf{w}}
\newcommand{\xVec}{\mathbf{x}}
\newcommand{\rhoVec}{\boldsymbol{\rho}}
\newcommand{\wminVec}{\wVec^\tmin}
\newcommand{\wmaxVec}{\wVec^\tmax}

\newcommand{\deltathVec}{\boldsymbol{\delta}_\tth}

\newcommand{\rhok}{\rho\ok}
\newcommand{\rholo}{\rho^\tmin}
\newcommand{\rhohi}{\rho^\tmax}
\newcommand{\rhox}{\tilde{\rho}}

\newcommand{\Cth}{C_\mathrm{t}}
\newcommand{\Cst}{C_\mathrm{s}}
\newcommand{\CthOn}{C_{\text{on}}}
\newcommand{\CthSw}{C_{\text{sw}}}

\newcommand{\sat}{\operatorname{sat}}

\newtheorem{remark}{Remark}

\newtheorem{lemma}{Lemma}

\newtheorem{theorem}{Theorem}
\newtheorem{problem}{Problem}

\newcommand{\proof}[1][]{\ifthenelse{\equal{#1}{}}{\noindent\hspace{2em}{\itshape Proof: }}{\noindent\hspace{2em}{\itshape Proof #1: }}\@\xspace}

\usepackage{todonotes}
\setlength{\marginparwidth}{2cm}

\newcommand{\PrescientMpc}{Prescient\@\xspace}
\newcommand{\MinimaxMpcResDroopOffDroopSatOff}{MM\@\xspace}
\newcommand{\MinimaxMpcResDroopOffDroopSatOn}{Sat. MM\@\xspace}
\newcommand{\MinimaxMpcResDroopOnDroopSatOff}{RES-droop MM\@\xspace}
\newcommand{\MinimaxMpcResDroopOnDroopSatOn}{Sat. RES-droop MM\@\xspace}

\makeatletter
\renewcommand{\todo}[2][]{\tikzexternaldisable\@todo[#1]{#2}\tikzexternalenable}
\newcommand{\atSh}[1]{\tikzexternaldisable\@todo[inline, color = red!20]{@Steffen: #1}\tikzexternalenable}
\newcommand{\atAs}[1]{\tikzexternaldisable\@todo[inline, color = blue!20]{@Ajay: #1}\tikzexternalenable}
\newcommand{\atCh}[1]{\tikzexternaldisable\@todo[inline, color = green!20]{@Chris: #1}\tikzexternalenable}
\newcommand{\atAll}[1]{\tikzexternaldisable\@todo[inline, color = black!20]{@All: #1}\tikzexternalenable}
\newcommand{\rephrase}[1]{\tikzexternaldisable\@todo[inline, color = red!40]{@doubt: #1}\tikzexternalenable}

\makeatother

\definecolor{vgRed}{RGB}{193, 48, 24}
\definecolor{vgOrange}{RGB}{243, 111, 19}
\definecolor{vgYellow}{RGB}{235, 203, 56}
\definecolor{vgGreen}{RGB}{162, 185, 105}
\definecolor{vgLightBlue}{RGB}{13, 149, 188}
\definecolor{vgDarkBlue}{RGB}{6, 56, 81}

\usepackage{tikz}
\usepackage{pgfplots}
\pgfplotsset{compat=1.12}
\usepgfplotslibrary{fillbetween}

\pgfplotsset{every axis/.append style={semithick,tick style={major tick
			length=4pt,semithick,black}}}

\pgfkeys{/pgfplots/x axis shift down/.style={
		x axis line style={yshift=-#1},
		xtick style={yshift=-#1},
		xticklabel shift={#1}}}

\pgfkeys{/pgfplots/y axis shift left/.style={
		y axis line style={xshift=-#1},
		ytick style={xshift=-#1},
		yticklabel shift={#1},
		scaled y ticks = false,
		y tick label style={/pgf/number format/fixed,
		}
	}
}

\pgfplotsset{myPlot/.style={%
		width=8cm,
		height=2.8cm,
		separate axis lines,
		axis x line*=bottom,
		x axis shift down = 7pt,
		enlarge x limits=false,
		axis y line*=left,
		y axis shift left = 7pt,
		enlarge y limits={abs=.25pt},
		enlarge x limits={abs=.25pt},
	}
}

\pgfplotsset{dataVariationPlot/.style={%
    myPlot,
    width = 4.0cm,
    height = 3.5cm,
    xlabel = {},
    xmin = -1,
    xmax = 1,
    xtick = {-1, 0, 1},
    xticklabels = {worst, nominal, best},
    clip = false,
    legend columns=2,
    legend style={
      draw=none,
      fill=none,
      at = {(1.4, 1.0)},
      yshift = 0mm,
      anchor = south,
      legend cell align=left,
      /tikz/every even column/.append style={column sep=0.3cm}
    },
}
}

\pgfplotsset{prescient/.style = {vgDarkBlue}}
\pgfplotsset{allOff/.style = {vgRed}}
\pgfplotsset{onlySat/.style = {vgOrange}}
\pgfplotsset{onlyResDroop/.style = {vgYellow}}
\pgfplotsset{allOn/.style = {vgGreen}}

\pgfplotsset{prescient dataVariation/.style = {prescient, mark=x, only marks, mark size = 3pt}}
\pgfplotsset{allOff dataVariation/.style = {allOff, mark=o, only marks, mark size = 3pt}}
\pgfplotsset{onlySat dataVariation/.style = {onlySat, mark=square, only marks, mark size = 3pt}}
\pgfplotsset{onlyResDroop dataVariation/.style = {onlyResDroop, mark=triangle, only marks, mark size = 3pt}}
\pgfplotsset{allOn dataVariation/.style = {allOn, mark=diamond, only marks, mark size = 3pt}}

\pgfplotsset{limit/.style = {const plot, color=black, dash pattern=on 1pt off 3pt on 3pt off 3pt, thin}}

\usepgfplotslibrary{external}
\tikzset{external/system call={pdflatex \tikzexternalcheckshellescape -halt-on-error
		-interaction=batchmode -jobname "\image" "\texsource"}}
\tikzexternalize[prefix=tikzPictures/]

\title{\LARGE \bf
Saturation-Aware Model Predictive Energy Management \\ for Droop-Controlled Islanded Microgrids*
}

\author{Steffen Hofmann$^{1}$, Ajay Sampathirao$^{1}$, Christian {A.} Hans$^{1}$, Jörg Raisch$^{1}$, Alexander Heidt$^{2}$ and Erich Bosch$^{2}$
\thanks{$^{1}$Technische Universität Berlin, Control Systems Group, Germany, \newline
\texttt{\small\{hofmann, sampathirao,\,hans,\,raisch\}\hspace{0pt}@control.tu-berlin.de}.}%
\thanks{$^{2}$Autarsys GmbH, Berlin, Germany,  \newline
\texttt{\small \{heidt, bosch\}@autarsys.com}.}%
\thanks{*This work was supported by the German Federal Ministry for Economic Affairs and Energy (BMWi), Project {No.} 0324024A.}
}

\begin{document}

\maketitle

\begin{abstract}
In this paper, we propose a minimax \ac{mpc}\hh based \acl{ems} that is robust with respect to uncertainties in renewable infeed and load.
The \ac{mpc} formulation includes a model of low-level droop control with saturation at the power and energy limits of the units.
Robust \ac{mpc}\hh based \aclp{ems} tend to under\hh utilize the renewable energy sources to guarantee safe operation.
In order to mitigate this effect, we further consider droop control of renewable energy sources.

For a microgrid with droop\hh controlled units, we show that enhancing droop feedback with saturation enlarges the space of feasible control actions.
However, the resulting controller requires to solve a mixed\hh integer problem with additional variables and equations representing saturation.
We derive a computationally tractable formulation for this problem.
Furthermore, we investigate the performance gained by using droop with saturation, renewable droop and combination of both in a case study.
\end{abstract}

\begin{IEEEkeywords}
Robust model predictive control, Energy management system, Saturation, Droop control, Microgrid.
\end{IEEEkeywords}


\section{INTRODUCTION}
Decarbonization of the energy sector is promoting a worldwide increase in the deployment of renewable energy~\cite{REN212018}.
This shifts the foundation of power systems from large, centralized generation units to smaller \acfp{der}.
However, the volatility associated with renewable generation has posed serious challenges for their integration into the existing power system setups.
In this context, the \ac{mg} concept represents a promising direction~\cite{ParLotKhoBah2015}.
In~\cite{PES2018}, an \ac{mg} is defined as follows:
``Generally, a microgrid is defined as a group of DERs, including Renewable Energy Resources (RES) and Energy Storage Systems (ESS), and loads, that operate together locally as a single controllable entity.
Microgrids exist in various sizes and configurations;
they can be large and complex networks with various generation resources and storage units serving multiple loads, or small and simple systems supplying a single customer.''

Here, we focus on islanded operation of \acp{mg}.
In such an \ac{mg}, the \acf{ems} is responsible for deciding the power set\hh points of the generation units to meet the demand locally and ensure an economic operation.
Usually, the \ac{ems} provides the power set\hh points on a time scale ranging between minutes to fraction of an hour, guaranteeing safe operation without violation of power and energy limits of the units.
On the lower control layers, often droop\hh based control strategies are used to maintain power balance (see, \eg , \cite{SchOrtAstRaiSez2014}).
This is shown in Figure \ref{fig:mg}.

\begin{figure}
  \centering
  \includegraphics{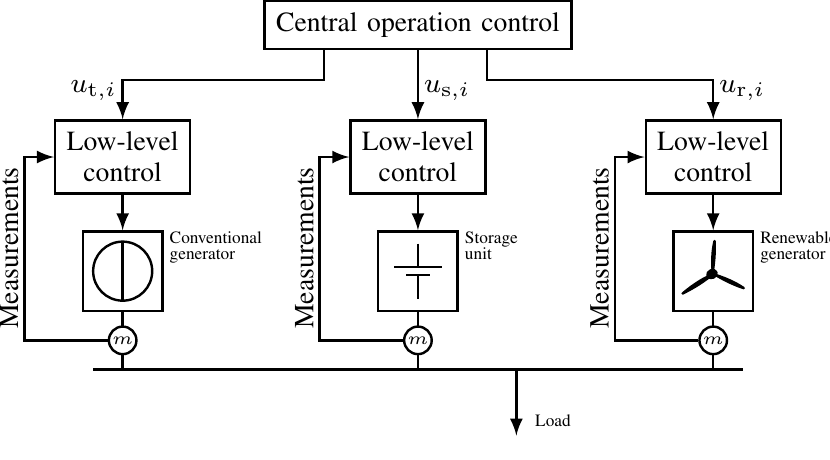}
  \caption{Block diagram representation: the central operation control provides power set\hh{}points to the units; the units change their power output based on a primary control law to ensure a power balance. The primary control laws of the different units can have saturation according to operation limits.}
  \label{fig:mg}
\end{figure}

Various control strategies to design \acp{ems} are available in literature.
Some of these strategies are reviewed in~\cite{ZiaElbBen2018}.
One popular strategy presented there is \acf{mpc}, which is a finite\hh horizon optimal control strategy that determines the control actions (\eg, power set\hh points) by solving an optimization problem.
At each time instance, an optimization problem is formulated based on renewable infeed forecast, demand forecast and the current state of the \ac{mg} and solved.
From the resulting sequence of control actions, only the first control action is applied to the system.
At the next time instance, the optimization problem is updated with the new state and forecasts and solved again.
A valuable feature of this control technique is its ability to account for constraints such as power and energy limits.
\ac{mpc}\hh based \acp{ems} are designed, \eg, in \cite{ParRikGli2014,HanNenRaiRei2014}.

Fluctuations in renewable infeed and load demand can compromise safe and reliable operation of \acp{mg}.
In literature there are \ac{mpc} designs that are capable of handling uncertainty (see, \eg,~\cite{SalOzkLudWeiHof2018,Lof2003,BemBorMor2003}).
In \cite{HanNenRaiRei2014}, the authors propose a \acf{mm} \ac{mpc} approach to address the uncertain load and renewable infeed.
This design considers the impact of droop control on the operation.

As opposed to an \ac{mpc}\hh \ac{ems} approach, pure droop schemes aiming for economic objectives are proposed in \cite{NutLohWanBla2015, NutLohWanBla2016,CheCheLiMenZheGueAbb2017}.
In \cite{NutLohWanBla2015}, a non\hh linear droop control law that prioritizes generation from units with less operation cost is suggested.
Furthermore, a strategy for tuning the droop gradient in accordance with the generation cost is proposed in \cite{NutLohWanBla2016}.
In \cite{CheCheLiMenZheGueAbb2017}, a linear droop scheme based on incremental cost of power generation is considered.
In \cite{DorSimBul2016}, it is stated that when the operation costs are strictly convex, selecting a suitable droop control 
law can lead to an economic operation.
One limitation of these approaches is their omission of storage dynamics and energy limits. Also, it is not possible to 
consider turning on and off the conventional generators.

A major drawback of the \ac{mm} \ac{mpc} proposed in \cite{HanNenRaiRei2014} is its conservativeness:
the power and energy limits are included as constraints and the power set\hh points are selected such that purely affine droop feedback would not violate these constraints for all possible disturbance realizations.
Thereby, this approach avoids saturation in the local unit controllers.
The limitation of avoiding actuator saturation in \ac{mpc} designs is recognized in \cite{CaoLin2005, HuaLiLinXi2011, OraBak2015}.
These works provide theoretical results to include this effect in feedback control of \ac{mm} \ac{mpc}.
However, the proposed approaches are not directly applicable to \acp{mg}.
For instance, they do not include binary decision variables that result from switching on and off conventional units and, in contrast to our \ac{mg} system, assume state feedback control.

Another limitation of most of the aforementioned power system control designs is their exclusion of \ac{res} in droop control.
These designs consider droop control only for storage and conventional units.
Increasing penetration of \ac{res} has encouraged developing droop strategies for \ac{res} (see, \eg, \cite{VanKooMeeVan2013,DasAltHanSorFlyAbi2018, NeeJohDelGonLav02016, XinLiuWanGanYan2013}).
They can increase the energy generated from renewable units, especially during periods with sufficient availability.
The benefit of droop control for renewable units in the reliable operation of a grid\hh connected \ac{mg} without energy storage is demonstrated in~\cite{XinLiuWanGanYan2013}. 
Similar results, including a practical study, are shown for an islanded scenario in~\cite{NeeJohDelGonLav02016}.
Such works encouraged us to extend the previous \ac{mm} \ac{ems} design (see \cite{HanNenRaiRei2014}) for systems with \ac{res} droop, expecting an improvement in the overall performance.
\subsection{Contributions}
The main contributions of this paper are as follows.
\begin{enumerate}
  \item We propose an \ac{mm} \ac{mpc}\hh based \ac{ems} that is both economic and robust.
  As compared to a related existing solution \cite{HanNenRaiRei2014}, we improve performance by including \ac{res} droop and droop saturation.
  \item The \ac{mm} \ac{mpc} with saturation corresponds to a robust mixed\hh integer program.
  Existing tractable reformulations (see, \eg, \cite{PauKinFaiGha2016}) are not directly applicable as the droop control with saturation constitutes a piecewise affine function.
  We derive a tractable reformulation of the \ac{mm} \ac{mpc} problem considered here.
  \item We show that the proposed controller increases the feasible set of controls over \ac{mm} \ac{mpc} without saturation.
  \item In a case study, we demonstrate performance improvement resulting from saturation and \ac{res} droop in terms of open\hh loop predicted cost as well as closed\hh loop cost.
\end{enumerate}

\subsection{Structure of the paper}
The structure of this paper is as follows.
Section~\ref{sec:microgrid:model} provides the mathematical model of an islanded \ac{mg}, droop control and saturation.
Section~\ref{sec:problem:formulation} defines the control objective and two \ac{mm} \ac{mpc} problems - one with \ac{res} droop and one with \ac{res} droop and saturation in all the units.
In Section~\ref{sec:solution}, we present a tractable reformulation of the \ac{mm} \ac{mpc} problem and also prove the increase of the feasible region.
Section~\ref{sec:casestudy} provides a case study comparing \ac{mm} \ac{mpc} controllers with and without saturation and with and without \ac{res} droop control.

\subsection{Mathematical preliminaries}
\label{sec:preliminaries}
The set of real numbers is denoted by $\RR$.
The sets of negative, positive, non\hh positive and non\hh negative real numbers are denoted by $\Rne$, $\Rp$, $\Rnz$ and $\Rpz$, respectively.
The set of positive integers is denoted by $\N$, and $\N_{[1, n]} = \{1, 2, \ldots, n\}$ is the set of the first $n$ positive integers.

The operator $\min(x, y)$ provides the element\hh wise minimum of the vectors $x$, $y$.
Similarly, $\max(x, y)$ provides the element\hh wise maximum.
If $x$ and $y$ are vectors or matrices of equal dimensions, then a comparison such as $x\leq y$ is true \ac{iff} all comparisons between elements at matching positions are true.
If $x$ is a matrix and $y$ is a vector, then a comparison is true \ac{iff} comparing every column in $x$ to $y$ according to the previous sentence evaluates to true.
If $x$ is a vector or matrix and $y$ is a scalar, then a comparison is true \ac{iff} comparing every single element in $x$ to $y$ evaluates to true.
If $x$ and $y$ are vectors or matrices of equal dimensions and $x\leq y$, then the set $[x,y]$ is the box determined by the intervals defined by elements at matching positions.

For a vector or matrix $x$, the transpose is denoted by $x\T$.
For a vector $x \in \RR^n$ with elements $x_i$, we use the notation $\mathbf{1}\T x = \sum_{i = 1}^{n} x_{i}$.
For $x \in \RR$ and $\delta \in \{0,1\}$, we define $y=\delta\wedge x$ as: $y=x$ if $\delta=1$ and $y=0$ if $\delta=0$.
When $x$ and $\delta$ are vectors, this operation is performed element\hh wise.
We sometimes use a dot to indicate that function arguments are omitted for brevity, \eg, writing $x(\cdot,\rho)$.

We explicitly point out that when we classify a function as \emph{increasing} or \emph{monotonically increasing}, respectively \emph{decreasing} or \emph{monotonically decreasing}, we allow it to be constant or constant in parts; \ie, we do \emph{not} imply \emph{strict} monotonicity.


\section{Mirogrid Model}
\label{sec:microgrid:model}
In this section, we develop the mathematical model of an islanded \ac{mg} including droop control and saturation limits.
The model of the \ac{mg} and the notation are motivated from  \cite{HanNenRaiRei2014,HanSopRaiReiPat2020} and related works.
This model is used later for the \ac{ems} design.

For the model, we assume that the lower control layers (also referred to as low\hh level control in this paper), \ie, primary and secondary control ensure a stable operation of the \ac{mg}.
In addition, we assume that start\hh up and shut\hh down times of the conventional units are small compared to the \ac{ems} sampling time.
Furthermore, we assume that storage losses are negligible compared to the uncertainties posed by the \ac{res} and load demand.

\subsection{Notation}
We consider the \ac{mg} model with \ac{res} like wind turbines and \ac{pv} plants, battery storage units and conventional units like diesel generators.
Each unit in the \ac{mg} is provided with a power set\hh point from the \ac{ems}.
However, the uncertainty both in generation from the \ac{res} and in demand necessitates power outputs that differ from these set\hh points.

In the \ac{mg}, let us denote the number of conventional units, storage units and renewable units by $T$, $S$ and $R$, respectively, and the number of loads by $D$.
At a given sampling instance $k \in \Npz$, let us denote the disturbance by $w\ok = [w_\trs\ok\T ~ w_\tld\ok\T]\T$, where $w_\trs\ok \in \Rpz^R$ is the available renewable infeed and $w_\tld\ok \in \Rnz^D$ is the load demand.
The power set\hh points provided by the \ac{ems} are denoted by $u\ok = [u_\tth\ok\T ~ u_\tst\ok\T ~ u_\trs\ok\T ]\T$, where $u_\tth\ok\in \RR^T$, $u_\tst\ok\in\RR^S$ and $u_\trs\ok\in \RR^R$ are the set\hh points of the conventional units, storage units and \acs{res}, respectively.
Each conventional unit can be switched on or off, which is represented by a vector of binary variables $\delta_\tth\ok \in \{0, 1\}^{T}$.
The power output of the units is denoted by $p\ok = [p_\tth\ok\T ~ p_\tst\ok\T ~ p_\trs\ok\T]\T$, where $p_\tth\ok \in \Rpz^{T}$, $p_\tst\ok \in \RR^{S}$ and $p_\trs\ok \in \Rpz^{R}$.
The energy level of storage units is denoted by $x\ok \in \Rpz^{S}$.

\subsection{Microgrid without saturation and without renewable power sharing}
In an islanded \ac{mg}, the local generation should match the local consumption at all time instances.
This power balance condition can be represented by the algebraic equation
\begin{equation}\label{eq:power:balance}
 \mathbf{1}\T\ptk + \mathbf{1}\T\psk + \mathbf{1}\T\prk + \mathbf{1}\T\wlk = 0.
\end{equation}
The power provided by the renewable units depends on the available renewable infeed $\wrk$ and the power set\hh points $\urk$, \ie,
\begin{equation}\label{eq:renewable:power}
 \prk = \min(\urk, \wrk).
\end{equation}

The uncertain load $\wlk$ and renewable infeed $\prk$ cause mismatch in the power balance.
This mismatch is compensated by the storage and conventional units~\cite{SchOrtAstRaiSez2014}.
Each unit has a low\hh level droop control that ensures a desired proportional power sharing~(see, \eg, \cite{KriHanSchRaiKra2017}, \cite{SchHanKraOrtRai2017}).
The share of each unit depends on the inverse droop gain, which we denote by $\chi_{\tth} \in \Rpz^{T}$ for the conventional units and $\chi_{\tst} \in \Rpz^{S}$ for the storage units.
These inverse droop gains can be chosen, \eg, according to the nominal power of the units.
Let us denote $\chi = [\chi_{\tth}\T ~ \chi_{\tst}\T]\T$.

\begin{subequations}\label{eq:storage:model}
Consider an auxiliary free variable $\rhok \in \RR$.
In steady-state, the power of the storage units can be described by
\begin{align}\label{eq:storage:power}
  \psk = \usk + \chi_{\tst}\,\rhok.
\end{align}
The dynamics of the storage units are
  \begin{align}\label{eq:storage:dynamics}
   x\ok = x\okm - \Ts\,\psk,
  \end{align}
  where $\Ts \in \Rp$ is the sampling time.
  The energy storage capacities are included by the constraints
  \begin{align}\label{eq:storage:limits}
   \xsmin \leq x\ok \leq \xsmax,
  \end{align}
\end{subequations}
with $\xsmin \in \Rpz^{S}$ and $\xsmax \in \Rp^{S}$.

Conventional units can be switched on or off.
When conventional unit $i\in\N_{[1,T]}$ is switched off, \ie, $\deltatik = 0$, then its power $\ptik = 0$ and it cannot participate in power sharing.
When switched on, \ie, $\deltatik = 1$, it participates in power sharing. This behavior can be modeled by
\begin{equation}\label{eq:conventional:power}
\ptk = \deltatk \wedge (\utk + \chi_{\tth}\,\rhok).
\end{equation}

\begin{subequations}\label{eq:power:limits}
The constraints on the power set\hh points and the power of the units are
  \begin{alignat}{2}
    \umin &\leq \uk & &\leq\umax,\label{eq:uminmax}\\
    \begin{bmatrix}\deltatk\wedge\ptmin\\ \psmin\\ \prmin\end{bmatrix} &\leq \pk & &\leq \pmax,\label{eq:pminmax}
  \end{alignat}
\end{subequations}
with $\umin\in\RR^{T+S+R}$, $\umax\in\RR^{T+S+R}$, $\pmin\in\Rpz^T\times\Rne^S\times\Rpz^R$ and $\pmax\in\Rp^{T+S+R}$.
We subdivide these limits in the same manner as $u\ok$ and $p\ok$, \eg, $\pmin=[(\ptmin)\T~(\psmin)\T~(\prmin)\T]\T$.

\subsection{Microgrid with renewable power sharing}

In the previous model, we have restricted power sharing to storage and conventional units.
However, as we already considered the limitation due to available renewable power, we can also include renewable units in power sharing.
Let us define $\chi_{\trs} \in \Rpz^{R}$ and redefine the vector of inverse droop constants by $\chi = [\chi_{\tth}\T ~ \chi_{\tst}\T~\chi_{\trs}\T]\T$.
Then the renewable power \eqref{eq:renewable:power} is redefined as
\begin{equation}\label{eq:prsokNoSatAtMin}
 \prk = \min(\urk + \chi_\trs\,\rhok, \wrk).
\end{equation}

\subsection{Microgrid with saturation}
We consider saturation as a hard limiter enforcing the physical operation range of a unit.
Therefore, the power output of units with droop control given by~\eqref{eq:storage:power}, \eqref{eq:conventional:power}, \eqref{eq:prsokNoSatAtMin} is now expressed as a feedback law of $\rhok$ and saturation.
Let us define saturation of a variable (for example, $p$) as
\begin{equation}\label{eq:saturation}
\sat(\pmin, p, \pmax) := \begin{cases}
\pmin , & \text{if } p < \pmin, \\
p,  & \text{if } p \in [\pmin, \pmax], \\
\pmax,  & \text{if } p > \pmax,
\end{cases}
\end{equation}
where $\pmin\leq\pmax$.
When $p$, $\pmin$ and $\pmax$ are vectors, the $\sat(\cdot, \cdot, \cdot)$ operator is understood element\hh wise.

With saturation, operation constraints are imposed by limiting the output power at the lower control layer.
Now the renewable power \eqref{eq:prsokNoSatAtMin} is redefined as
\begin{equation}\label{eq:prsok}
 \prk = \sat(\prmin, \urk + \chi_\trs\,\rhok, \wrk).
\end{equation}
Note that \eqref{eq:prsokNoSatAtMin} already includes saturation at the upper limit.

For conventional generators, the power given by~\eqref{eq:conventional:power} is redefined as
\begin{equation}\label{eq:conventional:saturation:power}
\ptk = \deltatk \wedge \sat(\ptmin, \utk + \chi_{\tth}\,\rhok, \ptmax).
\end{equation}

Operation of storage units is restricted by power as well as energy limits.
A straightforward way to implement energy\hh based saturation would involve setting the power to zero in the moment the energy reaches one of the bounds in~\eqref{eq:storage:limits}.
However, to avoid such sudden power changes, which could happen also between sampling instances of the \ac{ems}, and to keep the analysis simple, we choose a different approach.
Based on the sampling time $\Ts$ and current energy level $x$, dynamically adjusted power limits are determined by%
\begin{subequations}\label{eq:pstok}\begin{equation}\label{eq:psiminmaxk}\begin{split}
 \psmink &= \max\left(\psmin, \frac{x\okm-x^\tmax}{\Ts}\right),\\
 \psmaxk &= \min\left(\psmax, \frac{x\okm-x^\tmin}{\Ts}\right),
\end{split}\end{equation}
and the power is subject to saturation at these limits, \ie,
\begin{equation}\label{eq:pstokMain}
  \psk = \sat(\psmink, \usk+\chi_{\tst}\,\rhok, \psmaxk).
\end{equation}\end{subequations}

\remark{When the power sharing constant of a unit is $\chi_i = 0$, for $i\in \N_{[1, T+S+R]}$, then the unit does not participate in power sharing.
This means flexibility to decide which units can participate in power sharing.}

\remark{Please note that in the saturation\hh based model, the power set\hh point limits $\umin$ and $\umax$ in \eqref{eq:uminmax} can differ from and even lie beyond the power limits $\pmin$ and $\pmax$.}

\remark{Also note that now the explicit energy and power constraints~\eqref{eq:storage:limits} and \eqref{eq:pminmax} are no longer required.
Instead, the limits $\xsmin$, $\xsmax$, $\pmin$ and $\pmax$ are implicitly enforced by saturation.}


\section{MINIMAX MPC FOR MICROGRID WITH SATURATION}
\label{sec:problem:formulation}
The goal is to design an \ac{mpc}\hh based \ac{ems} that is robust \wrt uncertain available renewable infeed and load demand.
We consider an \ac{mm} formulation where the worst\hh case operation cost of the \ac{mg} over all possible disturbance realizations is minimized~\cite{Lof2003}.
In this section, we define the uncertainty, the operation cost and later formulate the \ac{mm} \ac{mpc} problem.

\subsection{Uncertainty model}
For the considered renewable units, the available power depends on the weather conditions (\ie, solar irradiation for \ac{pv} plants and wind speed for wind turbines).
Using historic data and a forecaster, future lower and upper bounds for the available power can be derived (see, \eg,~\cite{HynAth2018}).
Let us denote these bounds at time instance $k$ by
\begin{equation}\label{eq:renewable:uncertain}
w_{\trs}^{\tmin}\ok \leq \wrk \leq w_{\trs}^{\tmax}\ok,
\end{equation}
where $w_{\trs}^{\tmin}\ok \in \Rpz^R$ and $w_{\trs}^{\tmax}\ok \in \Rpz^R$.
Similarly, the minimum and maximum bounds for the load demands are
\begin{equation}\label{eq:load:uncertain}
w_{\tld}^{\tmin}\ok \leq \wlk \leq w_{\tld}^{\tmax}\ok,
\end{equation}
where $w_{\tld}^{\tmin}\ok \in \Rnz^D$ and $w_{\tld}^{\tmax}\ok \in \Rnz^D$.
Based on \eqref{eq:renewable:uncertain}, \eqref{eq:load:uncertain} we can pose
\begin{equation}
w^{\tmin}\ok \leq w\ok \leq w^{\tmax}\ok,
\end{equation}
where $\wmink = [w_\trs^{\tmin}\ok\T ~ w_\tld^{\tmin}\ok\T]\T$ and $\wmaxk = [w_\trs^{\tmax}\ok\T ~ w_\tld^{\tmax}\ok\T]\T$.

\subsection{Operation cost}
The operation cost under consideration is economically motivated.
We assume there is no cost on operating renewable units.
The operation cost of conventional units includes fuel cost, fixed\hh generation cost and switching cost, \ie,
\begin{multline}\label{eq:stage:cost:thermal}
  \ell_{\tth}(\ptk, \deltatk, \deltatkm) = \Cth\T\,\ptk + \CthOn\T\,\deltatk +\\
  \CthSw\T \left|\deltatk-\deltatkm\right|,
\end{multline}
where $\Cth, \CthOn, \CthSw \in \Rpz^{T}$.

Typically, the purpose of storage units is to store the excess energy for future usage.
This can be encouraged by including a cost on the storage power, \ie,
\begin{equation}\label{eq:stage:cost:storage}
\ell_{\tst}(p_{\tst}\ok) = \Cst\T\,\psk,
\end{equation}
where $\Cst \in \Rpz^{S}$.
Hence, $\ell_{\tst}$ is negative if power is stored, \ie, if $\psk$ is negative.
In particular, this cost discourages wasting available renewable power just because it cannot be consumed by load demand instantaneously.

The total operation cost of an \ac{mg} is the sum of \eqref{eq:stage:cost:thermal} and \eqref{eq:stage:cost:storage}, \ie,
\begin{multline}\label{eq:stage:cost}
  \ell(p\ok, \deltatk, \deltath\okm) = \ell_{\tth}(\ptk, \deltatk, \deltatkm) +\\
  \ell_{\tst}(\psk).
\end{multline}

\subsection{Minimax MPC}
In certainty equivalence \ac{mpc}, the power set-points are determined by minimizing the operation cost over the prediction horizon for a given disturbance realization.
Therefore, this formulation does not rigorously account for uncertainties. In a \acl{mm} strategy, uncertainties are handled by considering their worst\hh case impact~\cite{Lof2003}.
More precisely, a control (here, power set\hh points and switch statuses) is determined such that the operation cost is minimized over robustly feasible controls and at the same time maximized over possible disturbance realizations.
Robustly feasible controls guarantee constraints to be satisfied for all possible disturbance realizations.

Let us consider the prediction horizon of the \ac{mm} \ac{mpc} as $\Npr \in \N$.
At sampling instance $k$, the power predicted at future step $j \in\Npr$ is given by $p\okj$.
Let us define matrices to represent profiles of variables over the prediction horizon as
\begin{equation}\label{eq:defineVec}\begin{split}
 \deltathVec &:= [\deltath\okk~\cdots~\deltath\okNp],\\
 \uVec &:= [u\okk~\cdots~u\okNp],\\
 \pVec &:= [p\okk~\cdots~p\okNp],\\
 \xVec &:= [x\okk~\cdots~x\okNp],\\
 \wVec &:= [w\okk~\cdots~w\okNp],\\
 \wminVec &:= [\wmin\okk~\cdots~\wmin\okNp],\\
 \wmaxVec &:= [\wmax\okk~\cdots~\wmax\okNp].
\end{split}\end{equation}
Finally, let us define the operation cost over the prediction horizon as 
\begin{equation}\label{eq:operationalCost}
 J(\pVec, \deltathVec, \deltatk) := \sum_{j = 1}^{\Npr} \ell(p(k + j), \deltath(k+j), \deltath(k + j- 1)).
\end{equation}

Using the notation introduced above, we first formulate the minimax \ac{mpc} which includes renewable droop but 
no saturation in storage and conventional units.
\begin{problem}[\acs{mm} \ac{mpc} with \acs{res} droop]\label{prob:minimax:res}
\begin{subequations}\label{eq:minimax:res}
  \begin{align}
    \min_{\uVec,\,\deltathVec}\ \max_{\wVec}\ J(\pVec, \deltathVec, \deltatk),
  \end{align}
  where $J\odo$ is defined by~\eqref{eq:operationalCost}, subject to the model equations
  \begin{align}
  \begin{split}\label{eq:minimax:model:res}
      0 &= \mathbf{1}\T p(k + j) + \mathbf{1}\T w_\tld(k +j),\\
      \prs\okj &= \min(\urs\okj + \chi_{\trs}\,\rho\okj, \wrs\okj),\\
      \pth\okj &= \deltath\okj \wedge (\uth\okj + \chi_{\tth}\,\rho\okj),\\
      \pst\okj &= \ust\okj+\chi_{\tst}\,\rho\okj,\\
      x\okj &= x\okjm - \Ts\,\pst\okj,\\
      j &\in \N_{[1, \Npr]},
  \end{split}
  \shortintertext{initial conditions}
   x_\tst(k) &= x_{\tst, 0},\quad \deltath(k) = \delta_{\tth, 0},\label{eq:minimax:ic}\\
  \shortintertext{control constraints}
  \begin{split}\label{eq:minimax:control}
   \deltathVec &\in \{0, 1\}^{T\times \Npr},\\
   \umin &\leq \uVec \leq \umax,
  \end{split}
  \shortintertext{uncertainty model}
   \wminVec &\leq \wVec \leq \wmaxVec,\label{eq:minimax:uncert}\\
  \shortintertext{power and energy constraints}
  \begin{split}\label{eq:minimax:power}
   \begin{bmatrix}\deltathVec\wedge\ptmin\\ \psmin\\ \prmin\end{bmatrix} &\leq \pVec \leq \pmax,\\
   x^\tmin &\leq \xVec \leq x^\tmax,
   \end{split}
   \end{align}
   \end{subequations}
and the condition that the control $\deltathVec$, $\uVec$ must be feasible \wrt \eqref{eq:minimax:model:res}, \eqref{eq:minimax:ic}, \eqref{eq:minimax:power} 
$\forall\,\wVec\in[\wminVec,\wmaxVec]$.
\end{problem}

\vspace{0.3em}

We now formulate the \acl{mm} \ac{mpc} with droop and saturation at all the units.
\begin{problem}[\acs{mm} \ac{mpc} with \acs{res} droop and saturation]\label{prob:minimax}
\begin{subequations}\label{eq:minimax}
  \begin{align}
    \min_{\uVec,\,\deltathVec}\ \max_{\wVec}\ J(\pVec, \deltathVec, \deltatk),
  \end{align}
  where $J\odo$ is defined by~\eqref{eq:operationalCost}, subject to the model equations
  \begin{equation}\label{eq:minimax:model}\begin{split}
      0 &= \mathbf{1}\T p(k + j) + \mathbf{1}\T w_\tld(k +j),\\
      \prs\okj &= \makebox[0em][l]{\ensuremath{\sat(\prmin, \urs\okj +}}\begin{multlined}[t]\\
       \chi_{\trs}\,\rho\okj, \wrs\okj),\end{multlined}\\
      \pth\okj &= \makebox[0em][l]{\ensuremath{\deltath\okj \wedge \sat(\ptmin, \uth\okj +}}\begin{multlined}[t]\\
       \chi_{\tth}\,\rho\okj, \ptmax),\end{multlined}\\
      \psminn\okj &= \max(\psmin, (x\okjm-x^\tmax)\,\Ts^{-1}),\\
      \psmaxx\okj &= \min(\psmax, (x\okjm-x^\tmin)\,\Ts^{-1}),\\
      \pst\okj &= \makebox[0em][l]{\ensuremath{\sat(\psminn\okj, \ust\okj+}}\begin{multlined}[t]\\
       \chi_{\tst}\,\rho\okj, \psmaxx\okj),\end{multlined}\\
      x\okj &= x\okjm - \Ts\,\pst\okj,\\
      j &\in \N_{[1, \Npr]},
  \end{split}\end{equation}\end{subequations}
initial conditions~\eqref{eq:minimax:ic}, control constraints~\eqref{eq:minimax:control} and uncertainty model~\eqref{eq:minimax:uncert}, and the condition that the control $\deltathVec$, $\uVec$ must be feasible \wrt \eqref{eq:minimax:model}, \eqref{eq:minimax:ic} $\forall\,\wVec\in[\wminVec,\wmaxVec]$.
\end{problem}

\vspace{0.3em}

\begin{remark}
In Problem~\ref{prob:minimax} the constraints on the unit power and the storage unit energy are included in the saturation function and there is no need to write them explicitly.
\end{remark}

In general, minimax problems are hard to solve.
A tractable reformulation for robust integer problems without considering binary decisions in feedback is provided in~\cite{PauKinFaiGha2016}.
But the saturation in the above problem leads to binary variables in the feedback.
In previous work \cite{HanNenRaiRei2014}, a tractable reformulation has been found with binaries affecting the feedback.
In that work, both robust feasibility and the inner maximization problem only require checking two disturbance cases -- one where it is minimum at all times and one where it is maximum at all times.
However, the analysis presented in~\cite{HanNenRaiRei2014} is not directly applicable to the above problems as it does not include \ac{res} droop and droop saturation.
Therefore, it is necessary to derive tractable reformulations for the above problems.


\section{Tractable reformulation for the minimax mpc with saturation}\label{sec:solution}

In this section, to keep the presentation simple, we show the detailed derivation of the tractable reformulation for only Problem~\ref{prob:minimax}.
We show that the maximum operation cost occurs for minimum disturbance $\wVec=\wminVec$.
Furthermore, we show that ensuring that constraints are satisfied for $\wVec=\wminVec$ and $\wVec=\wmaxVec$ is sufficient to ensure that they are satisfied for all possible disturbance realizations.
We also show that the \ac{mm} \ac{mpc} problem with saturation has a larger feasible control region than the \ac{mm} \ac{mpc} problem without saturation.

\subsection{Tractable formulation}
In Problem~\ref{prob:minimax}, the switch status of the conventional generators $\deltathVec$ makes the outer minimization a mixed\hh integer problem.
However, the disturbance $\wVec$ cannot directly modify the switch status, which makes the inner maximization problem integer\hh free.
Furthermore, note that all droop gains $\chi_{\tst}$, $\chi_{\tth}$, $\chi_{\trs}$ are non\hh negative.
Hence the power values of all units which are not yet in saturation either increase or decrease simultaneously to achieve the power balance, or stay constant if the respective droop gain is zero.
The following theorems pinpoint the disturbance sequences that have to be considered (instead of all possible sequences) with regard to robust feasibility and the maximum cost.
\begin{theorem}\label{thm:constraint}
In Problem~\ref{prob:minimax}, given non\hh negative droop gains $\chi_{\tst}$, $\chi_{\tth}$, $\chi_{\trs}$, the set of feasible controls ($\deltathVec$, $\uVec$) reduces to the set of controls which are feasibile for disturbance realizations $\wVec=\wminVec$ and $\wVec=\wmaxVec$.

  \proof See Appendix~\ref{sec:proofof:lemmas}~and~\ref{sec:proofof:thm:constraint}.
\end{theorem}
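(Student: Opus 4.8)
The plan is to reduce robust feasibility to the solvability of a single scalar power\hh balance equation per step, and then to locate its worst cases at the two extreme disturbance sequences by monotonicity. Fix a control $(\deltathVec,\uVec)$ and a disturbance $\wVec$. For each $j\in\N_{[1,\Npr]}$ the model equations \eqref{eq:minimax:model} express $\pth\okj$, $\pst\okj$ and $\prs\okj$ as saturations of affine functions of the single scalar $\rho\okj$. Since every droop gain $\chi_{\tth},\chi_{\tst},\chi_{\trs}$ is non\hh negative and $\sat(\cdot,\cdot,\cdot)$ is continuous and non\hh decreasing in its middle argument, the total generation $G\okj(\rho):=\mathbf{1}\T\pth\okj+\mathbf{1}\T\pst\okj+\mathbf{1}\T\prs\okj$ is continuous and monotonically increasing in $\rho$. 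By the intermediate value theorem, the balance $G\okj(\rho)=-\mathbf{1}\T w_\tld\okj$ is solvable \ac{iff} the net demand lies between the all\hh lower\hh saturation total $\underline{G}\okj:=\mathbf{1}\T(\deltatkj\wedge\ptmin)+\mathbf{1}\T\psminn\okj+\mathbf{1}\T\prmin$ and the all\hh upper\hh saturation total $\overline{G}\okj:=\mathbf{1}\T(\deltatkj\wedge\ptmax)+\mathbf{1}\T\psmaxx\okj+\mathbf{1}\T\wrs\okj$. Feasibility for a given $\wVec$ is thus equivalent to $\underline{G}\okj\le-\mathbf{1}\T w_\tld\okj\le\overline{G}\okj$ holding at every step.

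Next I would show that the harder of these two inequalities tightens monotonically toward the extremes. Order $[\wminVec,\wmaxVec]$ componentwise, so that $\wminVec$ (least renewable, largest load) and $\wmaxVec$ (most renewable, least load) are its least and greatest elements. The upper inequality tightens as load grows, as $\wrs\okj$ shrinks, and---through $\psmaxx\okj$---as the stored energy $x\okjm$ falls; the lower inequality tightens as load shrinks and $x\okjm$ grows. The content of the supporting lemmas is that, for a \emph{fixed} control, the equilibrium $\rho\okj$ and the storage power $\pst\okj$ are non\hh increasing in $\wVec$, so the energy trajectory $x\okj$ is non\hh decreasing in $\wVec$. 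I would prove this by induction on $j$: the initial conditions \eqref{eq:minimax:ic} give a common $x\ok=x_{\tst,0}$ as base case, and the inductive step shows that the one\hh step map $\bigl(x\okjm,\wkj\bigr)\mapsto x\okj$ is order\hh preserving in both arguments, so the monotonicity propagates along the horizon.

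With these lemmas the theorem follows by a second induction on $j$. Assume $(\deltathVec,\uVec)$ is feasible for both $\wminVec$ and $\wmaxVec$, fix $\wVec\in[\wminVec,\wmaxVec]$, and suppose the energy is sandwiched, $x^{\wminVec}\okjm\le x^{\wVec}\okjm\le x^{\wmaxVec}\okjm$. Monotonicity of $\psminn\okj,\psmaxx\okj$ in $x\okjm$ then yields $\underline{G}^{\wVec}\okj\le\underline{G}^{\wmaxVec}\okj$ and $\overline{G}^{\wVec}\okj\ge\overline{G}^{\wminVec}\okj$ (the latter also using $\wrs\okj\ge w_\trs^\tmin\okj$). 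Combining these with the chain $\underline{G}^{\wmaxVec}\okj\le-\mathbf{1}\T w_\tld^\tmax\okj\le-\mathbf{1}\T w_\tld\okj\le-\mathbf{1}\T w_\tld^\tmin\okj\le\overline{G}^{\wminVec}\okj$---whose inner inequalities use $\wVec\in[\wminVec,\wmaxVec]$ and whose outer inequalities use feasibility at $\wmaxVec$ and $\wminVec$---gives $\underline{G}^{\wVec}\okj\le-\mathbf{1}\T w_\tld\okj\le\overline{G}^{\wVec}\okj$. Hence the power balance is solvable at step $j$, and the order\hh preserving one\hh step map keeps the energy sandwiched, closing the induction. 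Therefore feasibility at $\wminVec$ and $\wmaxVec$ implies feasibility for all $\wVec\in[\wminVec,\wmaxVec]$; the converse is immediate, which proves the claimed reduction.

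The main obstacle is the energy\hh monotonicity lemma, because the storage limits $\psminn\okj,\psmaxx\okj$ themselves depend on the energy state, so raising $\wVec$ simultaneously pushes the droop equilibrium $\rho\okj$ down and shifts the storage power window up. The delicate point is to show these effects never conflict, \ie\ that the one\hh step map is genuinely order\hh preserving; this needs a case distinction on whether storage is saturated, together with the observation that the regime in which storage sits at its upper (discharge) limit cannot occur under the larger disturbance, where $\rho\okj$ is driven down. A secondary issue is the possible non\hh uniqueness of $\rho\okj$ on flat segments of $G\okj$ (where some droop gains vanish or all units are already saturated), which is handled by fixing a canonical selection so that the powers and the energy recursion remain well defined.
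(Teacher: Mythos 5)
Your plan is sound and, at its core, parallels the paper's proof: both hinge on the same two monotonicity facts --- the droop equilibrium $\rho\okj$ is decreasing in $(x\okjm,w\okj)$ (Lemma~\ref{lem:rhoxscbsca}) and the one\hh step energy map is order\hh preserving (Lemma~\ref{lem:xscascblemma}) --- iterated along the horizon, with a canonical selection on flat segments exactly as you anticipate (the paper's $\rhox$ in \eqref{eq:rhox} takes the maximal root). Where you differ is the feasibility certificate. You test per\hh step solvability by the intermediate value theorem, sandwiching net demand between the saturation totals $\underline{G}\okj$ and $\overline{G}\okj$, and propagate the energy sandwich $x^{\wminVec}\okjm \leq x^{\wVec}\okjm \leq x^{\wmaxVec}\okjm$ by a second induction. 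The paper instead relaxes the problem: it defines $\rholo$, $\rhohi$ in \eqref{eq:rholohi} beyond which all droop\hh participating units saturate, augments the balance by $\min(0,\rho-\rholo)+\max(0,\rho-\rhohi)$ so that $\ptox\odo$ is surjective and $\rho\okj$ is well\hh defined for \emph{every} control and disturbance, and encodes feasibility as $\rholo\leq\rho\okj\leq\rhohi$; iterating Lemma~\ref{lem:xscascblemma} then sandwiches the entire trajectory, $\rhoVec_{\left|\wmaxVec\right.}\leq\rhoVec_{\left|\wVec\right.}\leq\rhoVec_{\left|\wminVec\right.}$, so only one scalar sequence has to be checked against fixed bounds, with no per\hh step bookkeeping of generation extremes. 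Your flagged ``delicate point'' (the interplay between state\hh dependent storage limits and the downward\hh shifted equilibrium) is dispatched in the paper without case distinctions by the transformation \eqref{eq:pstvokxokkpstv}: the dynamic limits \eqref{eq:psiminmaxk} are replaced by a statically saturated auxiliary power $\pstv\okj$ followed by saturating the energy update at $[\xsmin,\xsmax]$, whence joint monotonicity of $(x\okjm,\rho\okj)\mapsto x\okj$ is immediate; adopting this is cleaner than enumerating saturation regimes.

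One repair: with gains that are merely non\hh negative, a unit with $\chi_i=0$ contributes $\sat(\pimin,u_i,\pimax)$ (for a renewable unit, with upper limit $\wrs\okj$ instead of $\pimax$) to the balance independently of $\rho$, so your extremes $\underline{G}\okj$, $\overline{G}\okj$ must use these values for zero\hh gain units rather than $\pimin$, $\pimax$. Since these terms are still increasing in $\wrs$, every inequality in your chain keeps its direction and the argument closes unchanged.
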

\begin{theorem}\label{thm:cost}
  In Problem~\ref{prob:minimax}, given non\hh negative droop gains $\chi_{\tst}$, $\chi_{\tth}$, $\chi_{\trs}$, the worst\hh case operation cost corresponds to the disturbance realization $\wVec=\wminVec$.

  \proof See Appendix~\ref{sec:proofof:lemmas}~and~\ref{sec:proofof:thm:cost}.
\end{theorem}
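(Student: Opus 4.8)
The plan is to exploit monotonicity of the closed\hh loop trajectories with respect to the disturbance and to reduce the worst\hh case storage cost to a property of the terminal storage energy alone. First I would note that in $J$ the terms $\CthOn\T\deltath\okj$ and $\CthSw\T|\deltath\okj-\deltath\okjm|$ depend only on the control $\deltathVec$, which the inner maximization cannot influence; hence maximizing $J$ over $\wVec$ is equivalent to maximizing $\sum_{j=1}^{\Npr}\big(\Cth\T\pth\okj+\Cst\T\pst\okj\big)$. It therefore suffices to compare two feasible profiles $\wVec'\le\wVec$ (componentwise) and show that the reduced cost is not smaller for $\wVec'$; taking $\wVec'=\wminVec$ then gives the claim.

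The core step is a monotonicity statement proved by induction over $j$, with the common initial condition $x'(k)=x(k)=x_{\tst,0}$: for $\wVec'\le\wVec$ the trajectories satisfy $\rho'\okj\ge\rho\okj$, $\pth'\okj\ge\pth\okj$ and $x'\okj\le x\okj$ for all $j$. Fixing the control, I read the balance $\mathbf{1}\T\pth\okj+\mathbf{1}\T\pst\okj+\mathbf{1}\T\prs\okj=-\mathbf{1}\T\wld\okj$ as $g(\rho)=-\mathbf{1}\T\wld\okj$, where $g$ is the total unit infeed as a function of $\rho$. Because all droop gains are non\hh negative and $\sat$ preserves monotonicity, $g$ is increasing in $\rho$; moreover $g$ is increasing in $\wrs\okj$ (the upper saturation limit of $\prs\okj$) and increasing in $x\okjm$, since a larger $x\okjm$ raises both storage limits $\psminn\okj$, $\psmaxx\okj$ and hence $\pst\okj$. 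Using the induction hypothesis $x'\okjm\le x\okjm$ together with $\wrs'\okj\le\wrs\okj$ yields $g'(\rho)\le g(\rho)$ pointwise, while the required infeed rises, $-\mathbf{1}\T\wld'\okj\ge-\mathbf{1}\T\wld\okj$; as $g'$ is increasing this forces $\rho'\okj\ge\rho\okj$, and therefore $\pth'\okj\ge\pth\okj$ because the conventional limits do not depend on the state.

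To close the induction on the state, I would show that the one\hh step map $x\okj=F(x\okjm,\rho\okj)$, obtained by substituting $\psminn\okj$, $\psmaxx\okj$ into $\pst\okj=\sat(\psminn\okj,\ust\okj+\chi_{\tst}\rho\okj,\psmaxx\okj)$ and into $x\okj=x\okjm-\Ts\pst\okj$, is increasing in $x\okjm$ and decreasing in $\rho\okj$. Decrease in $\rho\okj$ is immediate; increase in $x\okjm$ follows by checking the three saturation branches, where $F$ equals $\max(x\okjm-\Ts\psmax,\xsmin)$, $x\okjm-\Ts(\ust\okj+\chi_{\tst}\rho\okj)$ and $\min(x\okjm-\Ts\psmin,\xsmax)$ respectively, each nondecreasing in $x\okjm$, with continuity across branches. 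Applying $F$ with $x'\okjm\le x\okjm$ and $\rho'\okj\ge\rho\okj$ then gives $x'\okj\le x\okj$, completing the step. I expect this verification of monotonicity of the \emph{state\hh dependent} saturation map to be the main obstacle, because the saturation bounds themselves move with the state and, as a result, $\pst\okj$ need \emph{not} be monotone in $\wVec$ pointwise.

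Finally I would assemble the cost. For the conventional part, $\pth'\okj\ge\pth\okj$ and $\Cth\ge0$ give $\sum_j\Cth\T\pth'\okj\ge\sum_j\Cth\T\pth\okj$. For the storage part I sidestep the non\hh monotone pointwise behaviour of $\pst\okj$ by telescoping: from $x\okj=x\okjm-\Ts\pst\okj$ one obtains $\sum_{j=1}^{\Npr}\Cst\T\pst\okj=\Cst\T\big(x(k)-x(k+\Npr)\big)/\Ts$, which depends on the terminal storage energy only. Since $x(k)$ is fixed, $x'(k+\Npr)\le x(k+\Npr)$ and $\Cst\ge0$, this term is also not smaller for $\wVec'$. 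Hence $J(\wVec')\ge J(\wVec)$ whenever $\wVec'\le\wVec$, so the maximum over $[\wminVec,\wmaxVec]$ is attained at $\wVec=\wminVec$. (Where $g$ is flat and $\rho$ is not unique, the individual powers are still determined by the balance, so one may select $\rho'\okj\ge\rho\okj$ without loss of generality.)
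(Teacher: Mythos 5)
Your proposal is correct and takes essentially the same route as the paper's proof: a two-scenario comparison argument, iterated over the horizon, showing that a larger disturbance yields a smaller $\rho\okj$, smaller $\pth\okj$ and larger storage state (the paper's Lemmas~\ref{lem:rhoxscbsca} and~\ref{lem:xscascblemma}), followed by exactly the same telescoping rewrite of the storage cost so that only the terminal energy $x(k+\Npr)$ matters, with positivity of $\Cth$ and $\Cst$ closing the argument. The only cosmetic differences are that you verify monotonicity of the state-dependent saturation map branch-by-branch where the paper decomposes it into a statically saturated power $\pstv\okj$ followed by saturation of the energy at $[\xsmin,\xsmax]$, and that the paper formalizes your tie-breaking remark for flat regions via the augmented, surjective balance function $\ptox$ and the max-$\rho$ selection rule $\rhox$.
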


Using Theorem~\ref{thm:constraint} and~\ref{thm:cost}, the \ac{mm} \ac{mpc} Problem~\ref{prob:minimax} can be equivalenty stated as
\begin{problem}[Tractable \ac{mm} \ac{mpc}]\label{prob:tractable:minimax}
  \begin{align}
    \min_{\uVec,\,\deltathVec}\ J(\pVec, \deltathVec, \deltatk)_{\left| \wVec = \wminVec \right.},
  \end{align}
  where $J\odo$ is defined by~\eqref{eq:operationalCost}, subject to the model equations~\eqref{eq:minimax:model}, initial conditions~\eqref{eq:minimax:ic}, control constraints~\eqref{eq:minimax:control}, and the condition that the control $\deltathVec$, $\uVec$ must be feasible \wrt \eqref{eq:minimax:ic}, \eqref{eq:minimax:model} for $\wVec=\wminVec$ and for $\wVec=\wmaxVec$.
\end{problem}

\begin{remark}
The $\sat(\cdot,\cdot,\cdot)$ in the current formulation can be included with additional binary varibales as in~\cite{BemMor1999}, which may increase computational complexity.
\end{remark}

\subsection{Enhancing feasibility}
The robust feasibility requirement of the \ac{mm} \ac{mpc} optimization problem restricts the set of admissible controls, which leads to conservative power set\hh points.
In Problem~\ref{prob:minimax}, constraints on the power and energy are included as saturation instead of hard limits $\pVec\in[\pmin,\pmax]$, $\xVec\in[\xsmin,\xsmax]$ as in Problem~\ref{prob:minimax:res}.
It is shown here that this enlarges the feasible control space.

\begin{theorem}\label{thm:feasibility}
Any feasible solution to Problem~\ref{prob:minimax:res} is a feasible solution to Problem~\ref{prob:minimax}.
However, the converse does not hold in general.
Furthermore, a feasible solution for Problems~\ref{prob:minimax:res} and~\ref{prob:minimax} is associated with the same cost value in both problems.
\end{theorem}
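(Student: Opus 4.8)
The plan is to establish the three assertions separately: the forward inclusion of feasible sets, the cost equivalence on the common feasible set, and the strictness of the inclusion. The governing idea throughout is that on any point that is feasible for Problem~\ref{prob:minimax:res}, every saturation operator appearing in the model \eqref{eq:minimax:model} of Problem~\ref{prob:minimax} is \emph{inactive}, i.e. returns its middle argument unchanged, so the two models generate identical power trajectories.

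First I would take any $(\uVec,\deltathVec)$ feasible for Problem~\ref{prob:minimax:res}: by definition, for every $\wVec\in[\wminVec,\wmaxVec]$ there is an auxiliary trajectory $\rho$ under which the unsaturated model \eqref{eq:minimax:model:res} holds together with the hard limits \eqref{eq:minimax:power}. I would then verify inactivity of the saturations unit type by unit type. For a renewable unit, the lower bound $\prmin\le\prs\okj=\min(\urs\okj+\chi_{\trs}\,\rho\okj,\wrs\okj)$ from \eqref{eq:minimax:power} gives $\sat(\prmin,\urs\okj+\chi_{\trs}\,\rho\okj,\wrs\okj)=\min(\urs\okj+\chi_{\trs}\,\rho\okj,\wrs\okj)$. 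For a conventional unit, $\deltath\okj=0$ yields $0$ in both models, whereas $\deltath\okj=1$ together with $\ptmin\le\uth\okj+\chi_{\tth}\,\rho\okj\le\ptmax$ makes the inner $\sat(\cdot)$ inactive.

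The storage case is the crux of the inclusion, since here the hard \emph{energy} limits must be recast as power limits. Substituting $x\okj=x\okjm-\Ts\,\pst\okj$ into $x^\tmin\le x\okj\le x^\tmax$ gives exactly $(x\okjm-x^\tmax)\,\Ts^{-1}\le\pst\okj\le(x\okjm-x^\tmin)\,\Ts^{-1}$, and intersecting this with $\psmin\le\pst\okj\le\psmax$ reproduces precisely $\psminn\okj\le\pst\okj\le\psmaxx\okj$ for $\psminn,\psmaxx$ as defined in \eqref{eq:minimax:model}; hence the storage $\sat(\cdot)$ is inactive as well. With all saturations inactive, the same $\rho(\wVec)$ satisfies the model equations \eqref{eq:minimax:model} and preserves power balance, so $(\uVec,\deltathVec)$ is feasible for Problem~\ref{prob:minimax}, where the explicit limits \eqref{eq:minimax:power} are no longer imposed. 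For the cost equivalence I would note that, because robust feasibility in Problem~\ref{prob:minimax:res} holds for \emph{every} $\wVec$, the saturations are inactive for every $\wVec$, so $\pVec$ as a function of $\wVec$ coincides in both problems; since $\ell$ in \eqref{eq:stage:cost} and hence $J$ in \eqref{eq:operationalCost} depend on the control only through $\pVec$ and $\deltathVec$, the inner objective $J(\wVec)$ is the same function in both problems and $\max_{\wVec}J$ agrees (Theorems~\ref{thm:constraint} and~\ref{thm:cost} let me restrict this to $\wVec\in\{\wminVec,\wmaxVec\}$ if desired).

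The strictness of the inclusion is the step I expect to be the main obstacle, because the counterexample must be \emph{robustly} feasible for Problem~\ref{prob:minimax} for all $\wVec$, not merely for one. I would exhibit a minimal instance, e.g.\ $T=S=1$, $R=0$ with a single load and positive droop gains $\chi_{\tth},\chi_{\tst}$, and choose a storage set-point $\ust$ such that, for the worst-case load, the unsaturated storage power $\ust\okj+\chi_{\tst}\,\rho\okj$ exceeds $\psmaxx\okj$ (equivalently, drives $x\okj$ below $x^\tmin$). For that disturbance Problem~\ref{prob:minimax:res} violates \eqref{eq:minimax:power} and is infeasible, whereas in Problem~\ref{prob:minimax} the storage power clips to $\psmaxx\okj$ and the conventional unit, with its range and set-point chosen large enough, absorbs the residual through its droop; by Theorem~\ref{thm:constraint} it suffices to confirm feasibility at $\wVec=\wminVec$ and $\wVec=\wmaxVec$ to certify robust feasibility over the whole box. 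This control then lies in the feasible set of Problem~\ref{prob:minimax} but not of Problem~\ref{prob:minimax:res}, which completes the argument.
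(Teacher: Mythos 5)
Your proof is correct, and for the first two claims it matches the paper's argument in substance: your unit-by-unit verification that every saturation in \eqref{eq:minimax:model} is inactive along a Problem~\ref{prob:minimax:res}-feasible trajectory is exactly the content of the paper's observation that Problem~\ref{prob:minimax:res} can be rewritten as Problem~\ref{prob:minimax} augmented with anti-saturation constraints ($\urs\okj + \chi_\trs\,\rho\okj \geq \prmin$, $\ust\okj+\chi_\tst\,\rho\okj\in[\psmin,\psmax]$, the energy condition, and the conventional-unit condition); your recasting of the energy limits as the power limits $\psminn\okj\le\pst\okj\le\psmaxx\okj$ is the same algebra the paper uses implicitly in defining \eqref{eq:psiminmaxk}. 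On the cost claim you are in fact slightly more careful than the paper: its one-line justification (``the problems differ only in constraints, not in the cost function'') tacitly relies on the fact, which you state explicitly, that $\pVec$ as a function of $\wVec$ coincides in the two models because the saturations are inactive for \emph{every} $\wVec\in[\wminVec,\wmaxVec]$, so the inner maxima agree. Where you genuinely diverge is the counterexample for strictness. The paper pins $\rho\okj$ to zero by placing two set-points at opposite power limits, $u_1\okj=p_1^\tmin$ and $u_2\okj=p_2^\tmax$: without saturation, the hard limits force $\rho\okj=0$, so no nonzero imbalance can be compensated, while with saturation either sign of imbalance is absorbed by whichever unit is not clipped. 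Your construction instead drives the storage past its energy-dependent limit $\psmaxx\okj$ under worst-case load and lets the conventional unit absorb the clipped residual. Both routes work; the paper's variant is more economical in that it involves no storage dynamics, no multi-period bookkeeping, and needs only a nondegenerate disturbance box, whereas yours must verify the conventional unit's headroom at both extremes $\wVec=\wminVec$ and $\wVec=\wmaxVec$ --- your appeal to Theorem~\ref{thm:constraint} to reduce the robust-feasibility certification to those two checks is the right tool for that, and is a nice touch the paper does not need. As written your example is a parameter sketch (``choose $\ust$ such that\ldots'') rather than a fully verified instance, but the paper's own counterexample is schematic to a comparable degree, so this is a difference of construction, not a gap.
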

\begin{proof}
An alternative way to express Problem~\ref{prob:minimax:res} is by augmenting Problem~\ref{prob:minimax} with the following constraints:
\begin{equation*}\begin{split}
 \urs\okj + \chi_\trs\,\rho\okj &\geq \prmin,\\
 \ust\okj + \chi_\tst\,\rho\okj &\in [\psmin, \psmax],\\
 x\okjm - \Ts\left(\ust\okj + \chi_\tst\,\rho\okj\right) &\in [\xsmin,\xsmax],\\
 \deltatkj \wedge (\ust\okj + \chi_{\tth}\,\rho\okj) &\in [\ptmin,\ptmax].
\end{split}\end{equation*}
Essentially, these are constraints preventing units from going into saturation.
Therefore, a feasible solution to the more stringent Problem~\ref{prob:minimax:res} is a feasible solution to Problem~\ref{prob:minimax}. 
As the problems differ only in constraints, not in the cost function, the feasible solution is associated with the same cost value in both problems.

But the vice\hh versa does not hold in general.
As a counter\hh example, consider a scenario when the power set\hh points of units $1$ and $2$ are $u_{1}\okj = p_{1}^\tmin$ and $u_{2}\okj = p_{2}^\tmax$.
Hence, without saturation, $p_{1}\okj = p_{1}^\tmin + \chi_{1}\,\rho\okj$ and $p_{2}\okj = p_{2}^\tmax + \chi_{2}\,\rho\okj$.
The power constraints $[p^\tmin ~p^\tmax]$ in Problem~\ref{prob:minimax:res} can be satisfied \acl{iff} $\rho\okj = 0$, \ie, a power imbalance caused by an unknown disturbance in the system cannot be compensated.
On the other hand, when units are allowed to saturate, $\rho\okj$ can become positive and unit $1$ can increase its power output, $p_{1}\okj>p_{1}^\tmin$, to compensate a negative power imbalance, while the power output of unit $2$ stays constant at $p_{2}\okj=p_{2}^\tmax$.
Similarly, a positive imbalance can be compensated by unit $1$ keeping its power at $p_{1}\okj=p_{1}^\tmin$ and unit $2$ decreasing its power, $p_{2}\okj<p_{2}^\tmax$.
\end{proof}


\section{CASE STUDY}\label{sec:casestudy}
In this section, we demonstrate the benefits of the proposed saturation\hh based robust controller assuming the \ac{mg} with high share of renewable infeed shown in Figure~\ref{fig:flatMercury}.
The \ac{mg} consists of a wind turbine, a \ac{pv} power plant, a storage unit, a conventional unit and a load.
Power and energy limits of the units are posed in Table~\ref{tab:casestudy} together with the droop gains and the weights of the cost function.
Note that $c_{\tst}$ is smalller than $c_{\tth}$ to discourage charging the storage with conventional power.

\begin{figure}[t]
	\centering
	\includegraphics{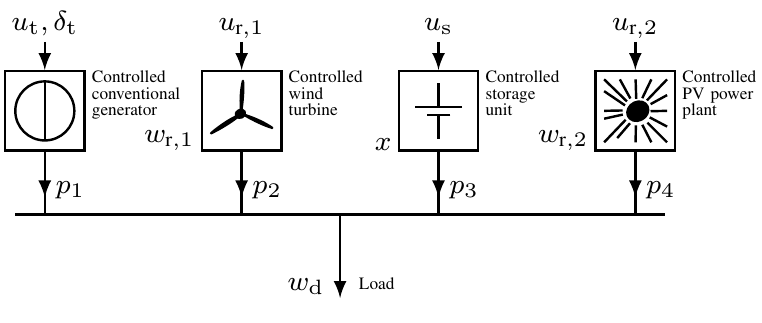}
	\caption{Test microgird topology}
	\label{fig:flatMercury}
\end{figure}

\begin{table}[t]
\caption{Unit parameters and weights of cost function.}\label{tab:casestudy}
\centering
 \begin{tabularx}{0.95\columnwidth}{lllX}
 \toprule
  Parameter & Value & Weight & Value\\
 \cmidrule(r){1-2}\cmidrule(l){3-4}
  $[\utmin~\usmin~\urwindmin~\urpvmin]$ & $[{-5}~{-5}~{-5}~{-5}]\,\unit{pu}$ & $\Cth$ & $1$\\
  $[\utmax~\usmax~\urwindmax~\urpvmax]$ & $[5~5~5~5]\,\unit{pu}$ & $\CthOn$ & $0.2$\\
  $[\ptmin~\psmin~\prwindmin~\prpvmin]$ & $[0.2~{-1}~0~0]\,\unit{pu}$ & $\CthSw$ & $0.3$ \\
  $[\ptmax~\psmax]$ & $[1~1]\,\unit{pu}$ & $\Cst$ & $0.9$ \\
  $[x_\tmin~x_\tmax]$ & $[0~6]\,\unit{pu\,h}$ & & \\
  $x^0$ & $2\,\unit{pu\,h}$ & & \\
  $[\chi_\tth~\chi_\tst~\chirwind~\chirpv]$ & $[1~1~1~1]$ & & \\
 \bottomrule
 \end{tabularx}
\end{table}

The \ac{ems} sampling time is chosen to be $15\,\unit{min}$ and the prediction horizon of the \ac{mpc} is $8\,\unit{h}$, \ie, $N_p = 32$. The simulation horizon is $6\,\unit{days}$, \ie, $\Nsi=576$.
The model and the controllers are implemented in Matlab\textsuperscript{\textregistered}2015a.
The \ac{mpc} optimization is formulated using YALMIP~\cite{Lof2004} and solved with Gurobi~{8.1.1}~\cite{gurobi}.

The available renewable infeed is generated based on real measurement data provided by the Atmospheric Radiation Measurement (ARM) Climate Research Facility \cite{ARM2011}, located at Graciosa Airport, Azores, Portugal.
The robust intervals for the disturbance are typically generated using a forecaster~\cite{KlaHan2018, HynAth2018}.
As this generation is out of the scope of the paper, we assume hypothetical robust intervals given by minimum and maximum disturbance, $w^\tmin\ok$ and $w^\tmax\ok$.
The resulting robust intervals are highlighted in Figure~\ref{fig:closedLoopSimulation}.
The robust interval for the load demand was generated in a similar fashion and is also illustrated in Figure~\ref{fig:closedLoopSimulation}.

For the simulations, we consider the \emph{worst\hh case} disturbance realization, $\wVec=\wminVec$, as this corresponds to the worst\hh case open\hh loop operation cost (see Theorem~\ref{thm:cost}).

\subsection{Prescient controller}
A prescient \ac{mpc} is a hypothetical controller which has prefect future knowledge of the available renewable infeed and the load demand.
In particular, for the worst\hh case disturbance realization $\wVec^\tmin$, the corresponding worst\hh case prescient \ac{mpc} is formulated based on Problem~\ref{prob:minimax} as follows.
The uncertainty model \eqref{eq:minimax:uncert} is replaced by $\wVec=\wminVec$ and, accordingly, the robust feasibility condition is replaced by the condition of feasibility for $\wVec=\wminVec$.
The closed\hh loop simulation with worst\hh case disturbance realization and worst\hh case prescient \ac{mpc} is visualized in Figure~\ref{fig:closedLoopSimulation}.
The corresponding open\hh loop cost values predicted each time the worst\hh case prescient \ac{mpc} problem is solved are included in Figure~\ref{fig:predictedObjective}.

\begin{remark}\label{rem:prescient:lower}
The open\hh loop cost value corresponding to the worst\hh case prescient controller is a lower bound for the cost value corresponding to the \ac{mm} \ac{mpc} Problem~\ref{prob:minimax}.
This can be easily deduced from Theorems~\ref{thm:constraint}~and~\ref{thm:cost}.
In fact, the open\hh loop cost of the prescient \ac{mpc} is a lower bound for the open\hh loop cost value corresponding to any conceivable robust \ac{mpc} controller.
This can be inferred by noting that the prescient \ac{mpc} is able to select an optimal physically possible power output profile $p\okj$, $\jInOneNp$, and a corresponding switch status profile.
\end{remark}

\subsection{Open-loop comparison}
Here, we compare open\hh loop performance of different \ac{mpc} controllers for given initial energy levels and switch statuses and identical robust forecast intervals.
A collection of $576$ variations of this data is obtained from the closed\hh loop simulation with the worst\hh case prescient \ac{mpc}.

Here, we define the labels for the different considered \ac{mpc} controllers:
\begin{enumerate}
 \item \PrescientMpc: worst\hh case prescient \ac{mpc},
 \item \MinimaxMpcResDroopOffDroopSatOff : Problem~\ref{prob:minimax:res} with $\chi_{\trs} = 0$,
 \item \MinimaxMpcResDroopOffDroopSatOn: Problem~\ref{prob:minimax} with $\chi_{\trs} = 0$,
 \item \MinimaxMpcResDroopOnDroopSatOff: Problem~\ref{prob:minimax:res} with $\chi_{\trs} > 0$,
 \item \MinimaxMpcResDroopOnDroopSatOn: Problem~\ref{prob:minimax} with $\chi_{\trs} > 0$.
\end{enumerate}

Figure~\ref{fig:predictedObjective} shows the open\hh loop cost predicted by these controllers.
We can observe:
\begin{itemize}
 \item \MinimaxMpcResDroopOffDroopSatOn \ac{mpc} does not reduce the predicted cost value compared to \MinimaxMpcResDroopOffDroopSatOff \ac{mpc}.
 \item \MinimaxMpcResDroopOnDroopSatOff \ac{mpc} reduces the cost value compared to \MinimaxMpcResDroopOffDroopSatOff \ac{mpc} during periods of high share of renewable infeed, \eg, between days $4$ and $6$.
 \item \MinimaxMpcResDroopOnDroopSatOn \ac{mpc} provides an equal or lower cost than the remaining controllers.
 This illustrates that \MinimaxMpcResDroopOnDroopSatOn \ac{mpc} increases the feasible region compared to \MinimaxMpcResDroopOnDroopSatOff \ac{mpc} and thereby achieves a lower open\hh loop cost (Theorem~\ref{thm:feasibility}).
\end{itemize}
These items suggest a synergy between the features \ac{res} droop and saturation of \emph{all} units.
In fact, the cost value of \MinimaxMpcResDroopOnDroopSatOn \ac{mpc} and the worst\hh case prescient \ac{mpc} apparently coincide over the entire simulation horizon.
This indicates that no conceivable robust \ac{mpc} and low\hh level control combination could perform better in this case study as far as open\hh loop prediction is concerned (see Remark~\ref{rem:prescient:lower}).

\begin{figure}[htp]
  \centering
  \includegraphics{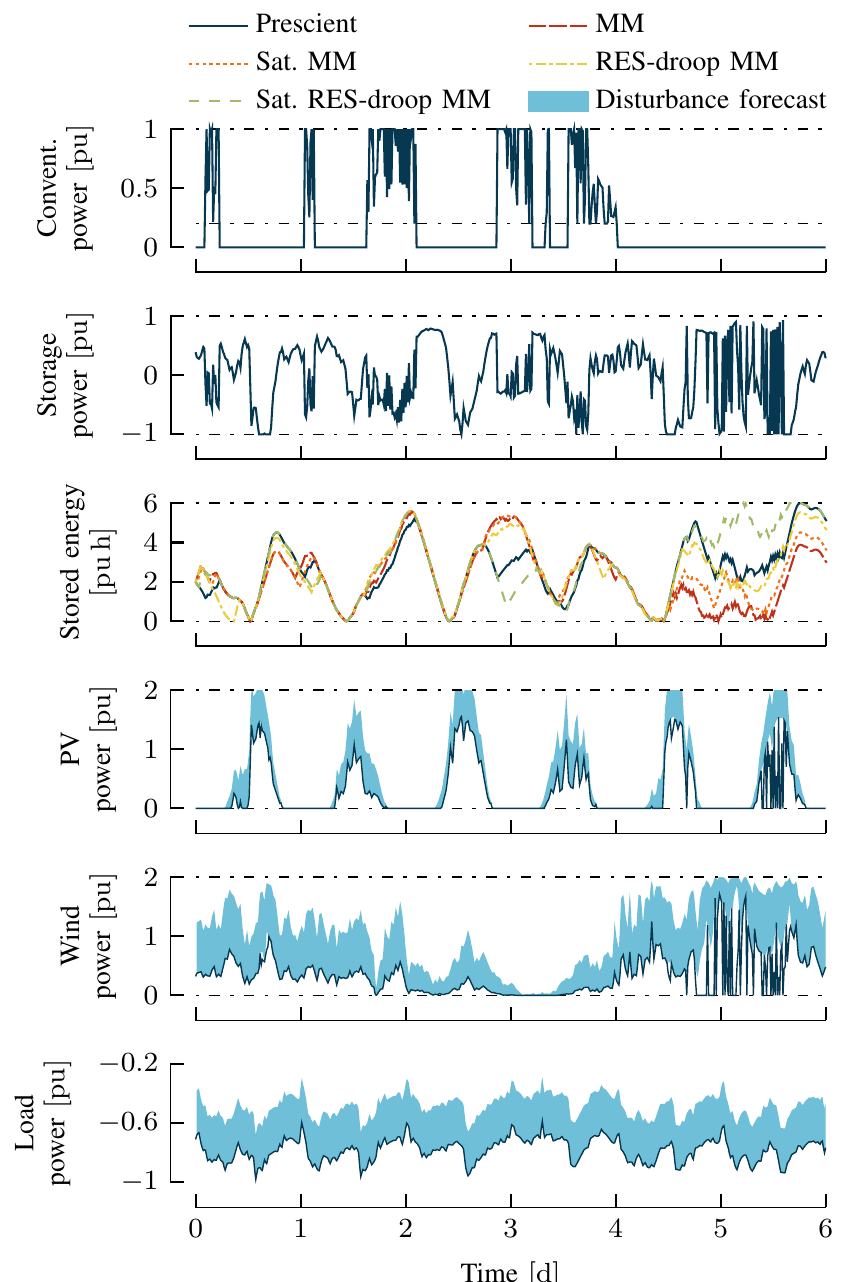}
  \caption{Closed\hh loop simulation with the worst case disturbance realization.
  For clarity, the plots corresponding to controllers other than prescient \ac{mpc} are only shown in the case of stored energy.}
  \label{fig:closedLoopSimulation}
\end{figure}

\begin{figure}[htp]
  \centering
  \includegraphics{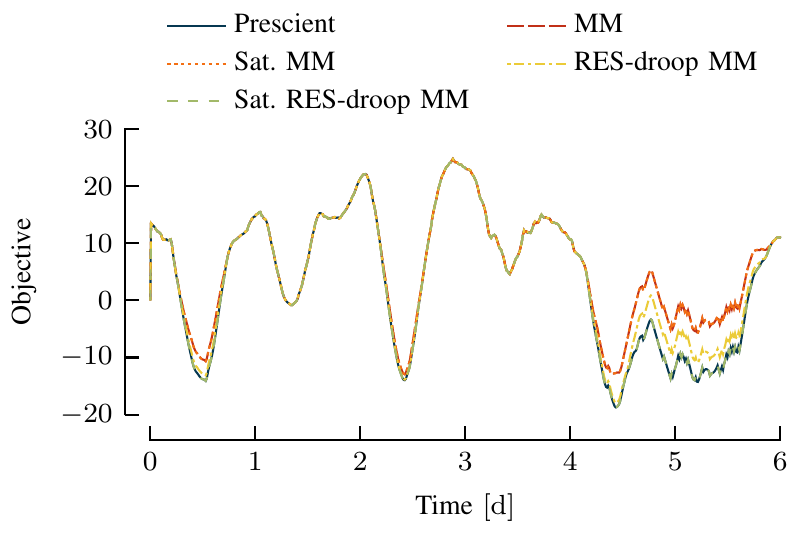}
  \caption{Predicted \ac{mpc} cost for $576$ different initial conditions and robust forecast intervals.
  The sequence of variations of this data stems from a closed\hh loop simulation with the worst\hh case prescient controller.}
  \label{fig:predictedObjective}
\end{figure}

The average (per sample over the $6$\hh day period) predicted cost and average predicted \ac{res} and conventional infeed of the controllers are shown in Table~\ref{tab:closedOpenLoop}.
Here, differences in average predicted \ac{res} infeed energies can be observed, while average predicted conventional infeed is identical for all \ac{mpc} controllers.
This indicates that lower predicted cost values are predominantly attributed to harvesting more energy from \ac{res} thus boosting storage energy level at the end of the prediction horizon.

\makeatletter\let\expandableinput\@@input\makeatother

\begin{table}[t]
\caption{Comparison of average open\hh loop \ac{mpc} performance for $576$ initial conditions (per\hh sample).}\label{tab:closedOpenLoop}
\centering
 \begin{tabularx}{0.95\columnwidth}{lXXX}
 \expandableinput closedOpenLoop-dataVariation-1
 \end{tabularx}
\end{table}

\begin{table}[t]
\caption{Comparison of closed\hh loop performance for worst\hh case disturbance realization (per\hh sample except switchings).}\label{tab:closedLoop}
\centering
 \begin{tabularx}{0.95\columnwidth}{lXXXX}
 \expandableinput closedLoop-dataVariation-1
 \end{tabularx}
\end{table}

\subsection{Closed-loop simulation}
Closed\hh loop simulations with different \ac{mpc} controllers are performed over a $6$\hh day period, comprising $\Nsi=576$ sampling instances.
These closed\hh loop simulations are performed assuming the \emph{worst\hh case} disturbance realization $\wk=\wmink$.
Figure~\ref{fig:closedLoopSimulation} includes energy profiles resulting for different \ac{mpc} controllers.
Here, we can observe that the \MinimaxMpcResDroopOnDroopSatOn \ac{mpc} tends to harvest more energy resulting in higher energy levels.

Based on the total operation cost \eqref{eq:stage:cost} and the cost definition in Problem~\ref{prob:minimax}, closed\hh loop per\hh sample cost values are calculated over the entire $\Nsi=576$ samples in the simulation by
\begin{equation}
 J^\text{closed\hh loop}:=\nicefrac{1}{\Nsi}\textstyle\sum_{k=1}^{\Nsi}\ell\left(p\ok,\deltath\ok,\deltath\okm\right),
\end{equation}
where $\deltath(0)$ are the given initial switch statuses and $p\ok$, $\deltath\ok$, $k\in\{1,\ldots,\Nsi\}$, are the variable evolutions resulting from the respective simulation.
Per\hh sample \ac{res} and conventional infeed energies are defined in the same manner.
The resulting values for different controller are shown in Table~\ref{tab:closedLoop}. 
The \MinimaxMpcResDroopOnDroopSatOn \ac{mpc} and the prescient \ac{mpc} result in the same Per\hh sample \ac{res} and conventional infeed energies. 
The variation in the per\hh sample cost results from the higher number of switching in prescient \ac{mpc} compared to \MinimaxMpcResDroopOnDroopSatOn \ac{mpc}. 
The considered \ac{mpc} formuation uses a finite horizon and hence, it is not guanteed to deliver the optimal closed-loop performance. 
Furthermore, here the \ac{mpc} formulation can have multiple optimal solutions with same optimal cost which could explain different control decisions.
Also note that -- contrary to open\hh loop prediction -- not all closed\hh loop conventional infeed energies are identical.

\section{CONCLUSIONS}

In this paper, we have demonstrated that the conservativeness in minimax \ac{mpc} of a \acl{mg} can be reduced by including saturations in droop control and droop control of \acl{res}.
We have derived a tractable reformulation of the optimal control problem over a finite prediction horizon.
This reformulation is based on analytical solutions for the disturbance maximizing the cost function, independently of the control.
In a case study, we have observed that, compared to classical minimax \ac{mpc}, including droop control of \ac{res} provides a benefit,and including both droop control of \ac{res} and saturations in droop control of all units can reduce conservativeness of the minimax \ac{mpc}.

We think evident future work is to extend our model by a detailed battery model and an electrical network.
Also, we would like to further investigate the effects related to power sharing of \ac{res} and extend the reseach to include piecewise\hh affine droop curves or time varying droop parameters.
Furthermore,we plan to extend the robust \ac{mpc} in the following ways.
First, we suppose that our analysis can be extended to more general convex or monotonic costs.
Second,we think the conservativeness of the robust \ac{mpc} can be reduced, for example by formulating a multi\hh objective problem.
Finally, the impact of saturation on computational complexity should be studied.

\appendices

\subsection{Preliminaries and lemmas used for proving Theorems~\ref{thm:constraint}~and~\ref{thm:cost}}\label{sec:proofof:lemmas}

In the following, we prepare the ground for proving Theorems~\ref{thm:constraint}~and~\ref{thm:cost}, which deal with feasibility and the worst\hh case cost, respectively.
For evaluating feasibility, we will introduce a measure which allows us to decide whether a control is feasible or not.
We want this measure to be always well\hh defined -- also when the control is not feasible.
For this purpose, we reformulate Problem~\ref{prob:minimax} and use a relaxation.

For the proofs, it is convenient to solve the algebraic equations corresponding to the power balance and droop control with saturation for $\rho\okj$.
Therefore, we define a $\rho\okj$\hh candidate explicitly as a function of $u\okj$, $\deltath\okj$, $x\okjm$, $w\okj$, $\jInOneNp$.
The following definitions and Lemma~\ref{lem:rhoxscbsca} are concerned with this $\rho\okj$\hh candidate function and its properties when we assume that $u\okj$, $\deltath\okj$, $x\okjm$, $w\okj$ are independent variables.
Based on the function and its established properties, Lemma~\ref{lem:xscascblemma} and the proofs thereafter analyze the evolution of variables over the prediction horizon $\jInOneNp$.
Please note that we sometimes ease notation by omitting the time indexes $k+j$ and $k+j-1$.

We denote the droop control laws with saturation \eqref{eq:prsok}, \eqref{eq:conventional:saturation:power}, \eqref{eq:pstok} by functions of the independent variables by writing $\prs(\urs,\wrs,\rho)$, $\pth(\uth,\deltath,\rho)$, $\pst(\ust,x,\rho)$, respectively.
Note that one of the important facts about these functions is: for any fixed values of the other variables, they are monotonically increasing in $\rho$.
Let us define auxiliary lower and upper bounds on $\rho$, so that for values of $\rho$ exceeding those bounds it is guaranteed that all units (except those with zero droop gain, the power output of which is not affected by $\rho$) are in lower respectively in upper saturation.
These bounds are determined by the range of possible power to power\hh setpoint differences, \ie,
\begin{subequations}\label{eq:rholohi}\begin{align}
 \rholo &:= \min_{i\in\mathbb{N}_{[1,T+S+R]}\setminus\{i|\chi_i=0\}}\frac{\pimin-\uimax}{\chi_i},\\
 \rhohi &:= \max_{i\in\mathbb{N}_{[1,T+S+R]}\setminus\{i|\chi_i=0\}}\frac{\pimax-\uimin}{\chi_i}.
\end{align}\end{subequations}
Because power output of units is constant for $\rho$ outside of these bounds, we can \wolg add the auxiliary constraint%
\begin{equation}\label{eq:rhorholohi}
 \rholo \leq \rho\okj \leq \rhohi\quad\forall\,\jInOneNp
\end{equation}
to \eqref{eq:minimax:model} in Problem~\ref{prob:minimax}.
This, in turn, enables us to augment \eqref{eq:power:balance} by an additional term.
That is, we define
\begin{multline}\label{eq:ptox}
 \ptox(u,\deltath,x,w,\rho) := \\
 \min\left(0,\rho-\rholo\right) + \max\left(0,\rho-\rhohi\right)+\\
 \mathbf{1}\T\,\pth(\uth,\deltath,\rho) + \mathbf{1}\T\,\pst(\ust,x,\rho) + \mathbf{1}\T\,\prs(\urs,\wrs,\rho) + \mathbf{1}\T\,\wld
\end{multline}
to ensure that, for any combination of $u$, $\deltath$, $x$, $w$, we can always find a $\rho\in\RR$ which satisfies%
\begin{equation}\label{eq:ptoxbalance}
 \ptox(u,\deltath,x,w,\rho) = 0.
\end{equation}

Accordingly, \eqref{eq:ptoxbalance} is identical to \eqref{eq:power:balance} only as long as \eqref{eq:rhorholohi} holds, but the function $\ptox(\cdot,\rho)$ by itself is defined for $\rho\in\RR$ and -- because of the additional term -- is a surjective function of $\rho$.
It is also piecewise affine, continuous and monotonically increasing.
It may be constant on some intervals.
However, it is easy to see that any interval where $\ptox(\cdot,\rho)$ is constant fulfills the following.
1) It is contained in $[\rholo,\,\rhohi]$.
2) All unit power functions are constant on it.
If $\ptox\odo=0$ is located on a constant interval, the lower control layers would settle at a certain $\rho$.
\Wolg, let us assume that always the maximum possible $\rho$ is selected and define a function that provides a $\rho$\hh solution of \eqref{eq:ptoxbalance} as
\begin{multline}\label{eq:rhox}
 \rhox(u,\deltath,x,w) := \max_{\rho\in\RR}\rho\ \ \text{\st}\ \ \ptox(u,\deltath,x,w,\rho) = 0.
\end{multline}

Using this function to define a $\rho\okj$\hh candidate, the constraint \eqref{eq:rhorholohi}, and the aforementioned functions to express the droop control laws with saturation, the robust optimal control Problem~\ref{prob:minimax} can be equivalently stated as
\begin{problem}\label{prob:minimax:reform}
\begin{subequations}\label{eq:minimax:reform}
  \begin{align}
    \min_{\uVec,\,\deltathVec}\ \max_{\wVec}\ J(\pVec, \deltathVec, \deltatk),
  \end{align}
  where $J\odo$ is defined by~\eqref{eq:operationalCost}, subject to the model equations consisting of \eqref{eq:ptox}, \eqref{eq:rhox} and
  \begin{equation}\label{eq:minimax:reform:model}\begin{split}
      \prs\okj &= \prs(\urs\okj,\wrs\okj,\rho\okj),\\
      \pth\okj &= \pth(\uth\okj,\deltath\okj,\rho\okj),\\
      \pst\okj &= \pst(\ust\okj,x\okjm,\rho\okj),\\
      x\okj &= x\okjm - \Ts\,\pst\okj,\\
      \rho\okj &= \rhox(u\okj,\deltath\okj,x\okjm,w\okj),\\
      \rholo &\leq \rho\okj \leq \rhohi,\\
      j &\in \N_{[1, \Npr]},
  \end{split}\end{equation}
  the initial conditions~\eqref{eq:minimax:ic}, control constraints~\eqref{eq:minimax:control} and uncertainty model~\eqref{eq:minimax:uncert}, and the condition that the control $\deltathVec$, $\uVec$ must be feasible \wrt \eqref{eq:ptox}, \eqref{eq:rhox}, \eqref{eq:minimax:reform:model}, \eqref{eq:minimax:ic} $\forall\,\wVec\in[\wminVec,\wmaxVec]$.
\end{subequations}
\end{problem}

\begin{lemma}\label{lem:rhoxscbsca}
$\rhox(u,\deltath,x,w)$ is monotonically decreasing in $x$ and $w$.
That is, consider values of the respective variables corresponding to two scenarios, $x\sca$, $x\scb$ and $w\sca$ and $w\scb$.
The remaining variables are supposed to have the same values in both scenarios. Then,
\begin{subequations}\label{eq:rhoxscbsca}\begin{align}
 &\begin{multlined}[b]\phantom{w\scb \geq w\sca}\makebox[0cm][r]{\ensuremath{x\scb \geq x\sca}}\implies\rhox(u,\deltath,x\scb,w) \leq \rhox(u,\deltath,x\sca,w),\hspace{-10ex}\\ \end{multlined}\label{eq:rhoxscbscaxgeq}\\
 &\begin{multlined}[b]w\scb \geq w\sca\implies\rhox(u,\deltath,x,w\scb) \leq \rhox(u,\deltath,x,w\sca).\hspace{-10ex}\\ \end{multlined}\label{eq:rhoxscbscawgeq}
\end{align}\end{subequations}
\end{lemma}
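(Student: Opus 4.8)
The plan is to reduce both claims to a single comparison argument about the maximal root of the surjective, monotone map $\ptox(\cdot,\rho)$. The first step is to establish that $\ptox(u,\deltath,x,w,\rho)$ is monotonically \emph{increasing} in $x$ and in $w$ for fixed remaining arguments, complementing the monotonicity in $\rho$ already recorded after~\eqref{eq:rhox}. The dependence on $x$ enters only through $\mathbf{1}\T\,\pst(\ust,x,\rho)$: the storage bounds $\psminn=\max(\psmin,(x-\xsmax)\,\Ts^{-1})$ and $\psmaxx=\min(\psmax,(x-\xsmin)\,\Ts^{-1})$ are each increasing in $x$, and since $\sat(\cdot,\cdot,\cdot)$ is increasing in both its lower and its upper limit, $\pst$ is increasing in $x$. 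The dependence on $w=[\wrs\T~\wld\T]\T$ enters through $\mathbf{1}\T\,\prs(\urs,\wrs,\rho)$ and $\mathbf{1}\T\,\wld$: the former increases with $\wrs$ because $\wrs$ is exactly the upper saturation limit of $\prs$, and the latter increases trivially with $\wld$. The auxiliary terms $\min(0,\rho-\rholo)+\max(0,\rho-\rhohi)$ depend on neither $x$ nor $w$ and are immaterial here.

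With this monotonicity in hand, I would argue as follows, writing $\theta$ for either $x$ or $w$, holding the other arguments fixed, and suppressing them in the notation. Take $\theta\scb\geq\theta\sca$ and set $\rho\sca:=\rhox(\theta\sca)$, $\rho\scb:=\rhox(\theta\scb)$, so that $\ptox(\theta\sca,\rho\sca)=0$ and $\ptox(\theta\scb,\rho\scb)=0$. Monotonicity in $\theta$ gives $\ptox(\theta\sca,\rho\scb)\leq\ptox(\theta\scb,\rho\scb)=0$. Suppose, for contradiction, that $\rho\scb>\rho\sca$. Then monotonicity of $\ptox(\theta\sca,\cdot)$ in $\rho$ yields $\ptox(\theta\sca,\rho\scb)\geq\ptox(\theta\sca,\rho\sca)=0$, whence $\ptox(\theta\sca,\rho\scb)=0$; but this exhibits a root strictly larger than $\rho\sca$, contradicting that $\rho\sca$ is the \emph{maximal} root selected in~\eqref{eq:rhox}. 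Hence $\rho\scb\leq\rho\sca$, which is precisely~\eqref{eq:rhoxscbscaxgeq} for $\theta=x$ and~\eqref{eq:rhoxscbscawgeq} for $\theta=w$.

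The step I expect to be the main obstacle is coping with the fact that the monotonicity of $\ptox(\cdot,\rho)$ is only non-strict: since each unit power can be constant in $\rho$ on saturation plateaus, $\ptox(\cdot,\rho)$ may be flat on subintervals, its zero set may be a whole interval, and the intuitive picture ``the root slides down when the curve shifts up'' does not by itself single out a well-defined root. This is exactly why definition~\eqref{eq:rhox} selects the maximal root, and the contradiction argument above is tailored to lean on that maximality rather than on any strict monotonicity. I would also note that surjectivity of $\ptox(\cdot,\rho)$, guaranteed by the two auxiliary terms, ensures that $\rho\sca$ and $\rho\scb$ exist, so both inequalities are well posed.
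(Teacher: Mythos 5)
Your proof is correct and follows essentially the same route as the paper's: establish that $\ptox$ is monotonically increasing in $x$ (via the storage limits $\psminn$, $\psmaxx$ and monotonicity of $\sat$ in its bounds) and in $w$ (via $\prs$ and $\wld$), then exploit the maximal-root selection in~\eqref{eq:rhox} together with monotonicity in $\rho$ in a contradiction argument. Your unified treatment of $x$ and $w$ through the placeholder $\theta$ and your explicit remark on why maximality (rather than strict monotonicity) resolves the flat-plateau issue are only presentational refinements of the paper's argument, which proves the $x$-case in detail and handles the $w$-case by the analogous monotonicity fact.
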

\begin{proof}
Proving \eqref{eq:rhoxscbscaxgeq} makes use of the fact
\begin{multline}\label{eq:ptoxscbscaxgeq}
 x\scb \geq x\sca\implies\\
 \ptox(u,\deltath,x\scb,w,\rho) \geq \ptox(u,\deltath,x\sca,w,\rho)\ \ \forall\,\rho\in\RR,
\end{multline}
which can be deduced as follows: $\psminn$ and $\psmaxx$ given by \eqref{eq:psiminmaxk} are monotonically increasing in $x$; therefore, noting that $\sat(\pmin,p,\pmax)$ is monotonically increasing in $\pmin$ and $\pmax$, $\pst$ given by \eqref{eq:pstokMain} is monotonically increasing in $x$; therefore, $\ptox$ given by \eqref{eq:ptox} is monotonically increasing in $x$.

According to \eqref{eq:rhox},
\begin{subequations}
 \begin{align}
  \rhox\sca &:=\, \rhox(u,\deltath,x\sca,w) = \max\rho\nonumber\\
  &\quad\text{\st} \ \ \ptox(u,\deltath,x\sca,w,\rho) = 0,\label{eq:rhoxsca}\\
  \rhox\scb &:=\, \rhox(u,\deltath,x\scb,w) = \max\rho\nonumber\\
  &\quad\text{\st} \ \ \ptox(u,\deltath,x\scb,w,\rho) = 0,\label{eq:rhoxscb}
 \end{align}
\end{subequations}
which satisfy
\begin{subequations}\label{eq:ptoxrhox}
 \begin{align}
  \ptox(u,\deltath,x\sca,w,\rhox\sca) &= 0,\label{eq:ptoxrhoxsca}\\
  \ptox(u,\deltath,x\scb,w,\rhox\scb) &= 0.\label{eq:ptoxrhoxscb}
 \end{align}
\end{subequations}
As a consequence of \eqref{eq:ptoxscbscaxgeq} and \eqref{eq:ptoxrhoxscb} we know that
\begin{equation}\label{eq:ptoxxscarhoxscbleqz}
 \ptox(u,\deltath,x\sca,w,\rhox\scb) \leq \ptox(u,\deltath,x\scb,w,\rhox\scb) = 0.
\end{equation}
By definition \eqref{eq:rhoxsca}, $\rhox\sca$ is the maximum $\rho$ for which $\ptox(u,\deltath,x\sca,w,\rho)=0$.
Therefore, if we suppose $\rhox\scb > \rhox\sca$, this would imply
$$\ptox(u,\deltath,x\sca,w,\rhox\scb) \neq 0,$$
and \eqref{eq:ptoxrhoxsca} and the fact that $\ptox(u,\deltath,x\sca,w,\rho)$ is a monotonically increasing function of $\rho$ would imply
$$\ptox(u,\deltath,x\sca,w,\rhox\scb) \geq \ptox(u,\deltath,x\sca,w,\rhox\sca) = 0.$$
Consequently, $\ptox(u,\deltath,x\sca,w,\rhox\scb) > 0$ would be implied, which contradicts \eqref{eq:ptoxxscarhoxscbleqz}.
This concludes the proof of \eqref{eq:rhoxscbscaxgeq}.

Proving \eqref{eq:rhoxscbscawgeq} makes use of the fact
\begin{multline}\label{eq:ptoxscbscawrsgeq}
 w\scb \geq w\sca\implies\\
 \ptox(u,\deltath,x,w\scb,\rho) \geq \ptox(u,\deltath,x,w\sca,\rho)\ \ \forall\,\rho\in\RR,
\end{multline}
which can be deduced as follows: noting that $\sat(\pmin,p,\pmax)$ is monotonically increasing in $\pmin$ and $\pmax$, $\prs$ given by \eqref{eq:prsok} is monotonically increasing in $\wrs$; therefore, $\ptox$ given by \eqref{eq:ptox} is monotonically increasing in $\wrs$; since $\ptox$ is also monotonically increasing in $\wld$, it is monotonically increasing in $w$.
\end{proof}

\begin{lemma}\label{lem:xscascblemma}
Consider Problem~\ref{prob:minimax:reform} and two scenarios with common control actions $u\okj$, $\deltath\okj$ but different disturbance forecasts $w\sca\okj$ and $w\scb\okj$.
For a given $j$, if
\begin{subequations}\begin{align}
 w\scb\okj &\geq w\sca\okj,\label{eq:xscascblemmawscascb}\\
 x\scb\okjm &\geq x\sca\okjm,\label{eq:xscascblemmaxscascb}
\end{align}\end{subequations}
then
\begin{subequations}\begin{align}
 \rho\scb\okj &\leq \rho\sca\okj,\label{eq:xscascblemmarhox}\\
 \pth\scb\okj &\leq \pth\sca\okj,\label{eq:xscascblemmaptkj}\\
 x\scb\okj &\geq x\sca\okj.\label{eq:xscascblemmaxokk}
\end{align}\end{subequations}
\end{lemma}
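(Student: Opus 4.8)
The plan is to establish the three inequalities in the order they are stated, since each feeds into the next. The first, \eqref{eq:xscascblemmarhox}, is essentially a corollary of Lemma~\ref{lem:rhoxscbsca}: by the model equations of Problem~\ref{prob:minimax:reform}, in each scenario $\rho\okj = \rhox(u\okj,\deltath\okj,x\okjm,w\okj)$, and the two scenarios share the controls $u\okj$, $\deltath\okj$. Since $\rhox$ is monotonically decreasing in both $x$ and $w$, I would invoke Lemma~\ref{lem:rhoxscbsca} twice and chain the results: once in the $w$-argument using \eqref{eq:xscascblemmawscascb}, once in the $x$-argument using \eqref{eq:xscascblemmaxscascb}, giving $\rho\scb\okj \leq \rho\sca\okj$. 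The second inequality \eqref{eq:xscascblemmaptkj} then drops out of monotonicity in $\rho$: the conventional-power law $\pth(\uth,\deltath,\rho)$ is monotonically increasing in $\rho$ for fixed $\uth$, $\deltath$ (as recorded before Lemma~\ref{lem:rhoxscbsca}), so with matching controls and $\rho\scb\okj \leq \rho\sca\okj$ I obtain $\pth\scb\okj \leq \pth\sca\okj$ at once.

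The third inequality \eqref{eq:xscascblemmaxokk} is where I expect the real difficulty. Writing the dynamics as $x\okj = x\okjm - \Ts\,\pst(\ust\okj,x\okjm,\rho\okj)$, the two scenarios differ in two arguments of $\pst$ that act in \emph{opposite} directions: $x\okjm$ is larger in scenario~2 while $\pst$ increases in $x$, whereas $\rho\okj$ is smaller in scenario~2 while $\pst$ increases in $\rho$. Hence the sign of $\pst\scb\okj - \pst\sca\okj$ cannot be read off from monotonicity of $\pst$, and a naive comparison fails. My plan is to decouple the two effects by viewing the next energy level as $F(x,\rho) := x - \Ts\,\pst(\ust,x,\rho)$ and proving that $F$ is monotonically increasing in $x$ and monotonically decreasing in $\rho$. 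The $\rho$-part is immediate since $\pst$ increases in $\rho$; the crux is that $F$ is increasing in $x$ \emph{despite} $\pst$ itself increasing in $x$.

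To prove the latter I would do a case analysis on the saturation region of $\pst = \sat(\psminn,\ust+\chi_{\tst}\rho,\psmaxx)$, using that $\psminn$ and $\psmaxx$ from \eqref{eq:psiminmaxk} depend on $x$. When the droop argument lies in $[\psminn,\psmaxx]$, or saturates at the $x$-independent bounds $\psmin$ or $\psmax$, the term $\Ts\,\pst$ is independent of $x$, so $F$ has slope $1$; when it saturates at the $x$-dependent bound, $\psminn=(x-\xsmax)\Ts^{-1}$ or $\psmaxx=(x-\xsmin)\Ts^{-1}$, the term $\Ts\,\pst$ equals $x-\xsmax$ or $x-\xsmin$, so $F$ collapses to the constant $\xsmax$ or $\xsmin$ with slope $0$. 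Since $F$ is continuous (being built from continuous functions) and non-decreasing on each piece, it is globally non-decreasing in $x$. I would then combine the two monotonicities with the hypotheses: using $x\scb\okjm \geq x\sca\okjm$ and then $\rho\scb\okj \leq \rho\sca\okj$, I chain $x\scb\okj = F(x\scb\okjm,\rho\scb\okj) \geq F(x\sca\okjm,\rho\scb\okj) \geq F(x\sca\okjm,\rho\sca\okj) = x\sca\okj$, which is \eqref{eq:xscascblemmaxokk}. The main obstacle is exactly this self-limiting structure of the storage saturation: one must recognise that the very mechanism making $\pst$ grow with $x\okjm$ is capped precisely so that $x\okj$ never drops below $\xsmin$, yielding a combined update map with slope in $[0,1]$ and hence monotone in $x$.
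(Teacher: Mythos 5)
Your proof is correct, and its first two steps coincide with the paper's: \eqref{eq:xscascblemmarhox} is obtained by applying Lemma~\ref{lem:rhoxscbsca} in the $w$- and $x$-arguments and chaining, and \eqref{eq:xscascblemmaptkj} follows from monotonicity of $\pth(\uth,\deltath,\rho)$ in $\rho$ together with $\chi\geq0$, exactly as in the paper. For \eqref{eq:xscascblemmaxokk} the two arguments share the same key insight but execute it differently. The paper first transforms the storage model into the two-stage form \eqref{eq:pstvokxokkpstv}: an auxiliary power demand $\pstv\okj=\sat(\psmin,\ust\okj+\chi_{\tst}\,\rho\okj,\psmax)$ saturated only at the $x$-independent power limits, followed by an energy-level saturation $x\okj=\sat(\xsmin,x\okjm-\Ts\,\pstv\okj,\xsmax)$. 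In that form $\pstv$ is independent of $x\okjm$, so $\pstv\scb\okj\leq\pstv\sca\okj$ follows from \eqref{eq:xscascblemmarhox} alone, and \eqref{eq:xscascblemmaxokk} is immediate from monotonicity of $\sat$ in its middle argument. You instead keep the one-stage model and prove directly that the update map $F(x,\rho)=x-\Ts\,\pst(\ust,x,\rho)$ is non-decreasing in $x$ and non-increasing in $\rho$, via a case analysis on the saturation regions: slope $1$ in $x$ wherever the droop argument is interior or clipped at the $x$-independent limits $\psmin$, $\psmax$, and slope $0$ where the state-dependent bound from \eqref{eq:psiminmaxk} is active, since there $F$ collapses to the constant $\xsmax$ respectively $\xsmin$; continuity then gives global monotonicity, and chaining $F(x\scb\okjm,\rho\scb\okj)\geq F(x\sca\okjm,\rho\scb\okj)\geq F(x\sca\okjm,\rho\sca\okj)$ yields \eqref{eq:xscascblemmaxokk}. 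Your case analysis is in effect a proof of the model equivalence \eqref{eq:pstvokxokkpstv}, which the paper states as a transformation without detailed verification, so your version is marginally more self-contained; the paper's reformulation is cleaner and reusable, since it isolates once and for all the fact that the dynamic limits \eqref{eq:psiminmaxk} implement exactly the energy clipping to $[\xsmin,\xsmax]$. Both arguments are understood element-wise on the vector quantities, and both are complete.
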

\begin{proof}
Defining an auxiliary storage power demand $\pstv\okj$ that does not consider \acl{soc}\hh dependent power limits, the storage model \eqref{eq:pstok}, \eqref{eq:storage:dynamics} can be transformed into a model composed of
\begin{subequations}\label{eq:pstvokxokkpstv}\begin{align}
 \pstv\okj &:= \sat\left(\psmin, \ust\okj+\chi_\tst\,\rho\okj, \psmax\right),\label{eq:pstvok}\\
 x\okj &= \sat\left(\xsmin, x\okjm-\Ts\,\pstv\okj, \xsmax\right)\label{eq:xokkpstv}\\
 \shortintertext{and}
 \pskj &= \nicefrac{1}{\Ts}\left(x\okjm-x\okj\right).
\end{align}\end{subequations}
From \eqref{eq:xscascblemmawscascb}, \eqref{eq:xscascblemmaxscascb} and Lemma~\ref{lem:rhoxscbsca} follows \eqref{eq:xscascblemmarhox}.
From \eqref{eq:xscascblemmarhox} and \eqref{eq:conventional:saturation:power} and with $\chi\geq0$ follows \eqref{eq:xscascblemmaptkj}, and from \eqref{eq:xscascblemmarhox} and \eqref{eq:pstvok} follows
\begin{equation}
 \pstv\scb\okj \leq \pstv\sca\okj.
\end{equation}
Using this in \eqref{eq:xokkpstv} directly leads to \eqref{eq:xscascblemmaxokk}.
\end{proof}

Having derived Lemmas~\ref{lem:rhoxscbsca} and~\ref{lem:xscascblemma}, we can now use them to prove Theorems~\ref{thm:constraint} and~\ref{thm:cost}.

\subsection{Proof of Theorem~\ref{thm:constraint}}\label{sec:proofof:thm:constraint}

\begin{proof}
Here, we again refer to the vector respectively matrix definitions \eqref{eq:defineVec} for profiles over the prediction horizon.
We need to show that, for given initial conditions, a control is robustly feasible in the sense of Problem~\ref{prob:minimax} for all possible disturbance realizations $\wVec\in[\wminVec,\wmaxVec]$ \acl{iff} it is feasible for disturbance realizations $\wVec=\wminVec$ and $\wVec=\wmaxVec$.

Consider Problem~\ref{prob:minimax:reform} defined above, which is equivalent to Problem~\ref{prob:minimax}.
Note that for a relaxed problem -- where \eqref{eq:rhorholohi} is abandoned -- any control, $\umin\leq\uVec\leq\umax$, is feasible for any disturbance and initial conditions.
In particular, $\rho\okj$ is always well\hh defined, even if it is not contained in the interval $[\rholo,\rhohi]$.
Therefore, for the relaxed problem, consider the sequence $\rhoVec:=[\rho\okk\,\ldots\,\rho\okNp]$ which is uniquely determined by a given initial state $x'\ok$ and initial switch statuses $\deltath'\ok$, control $\deltathVec':=[\deltath'\okk\,\ldots\,\deltath'\okNp]$, $\uVec':=[u'\okk\,\ldots\,u'\okNp]$ and disturbance forecast $\wVec$.
Whether $\rhoVec$ satisfies $\rholo\leq\rho\okj\leq\rhohi$ $\forall\,\jInOneNp$ indicates if also in the context of Problem~\ref{prob:minimax:reform} the given control is feasible for the given disturbance forecast and initial conditions.

Therefore, we need to show that for any given $x'\ok\in[\xsmin,\xsmax]$, $\deltath'\ok\in\{0,1\}^T$, $\deltathVec'\in\{0,1\}^{T\times\Npr}$, $\uVec'\in[\umin,\umax]$, it holds that%
\begin{align}
 \nonumber\rholo&\leq{\rhoVec\,}_{\left|\begin{smallmatrix*}[l]x\ok=x'\ok,\\ \deltathVec=\deltathVec',\\ \uVec=\uVec',\\ \wVec\end{smallmatrix*}\right.}\leq\rhohi\quad \forall\,\wVec\in[\wminVec,\wmaxVec],\\
 \intertext{\acl{iff}}
 \rholo&\leq{\rhoVec\,}_{\left|\begin{smallmatrix*}[l]x\ok=x'\ok,\\ \deltathVec=\deltathVec',\\ \uVec=\uVec',\\ \wVec\end{smallmatrix*}\right.}\leq\rhohi\quad \begin{aligned}[t]\text{for}\ \wVec&=\wminVec\\ \text{and}\ \wVec&=\wmaxVec.\end{aligned}\label{eq:feasibilityTractable}
\end{align}
Note that the initial switch statuses $\deltath'\ok$ are missing in the condition blocks because they only impact the cost but not the constraints of the problem.

The only if\hh part is trivial as $\wminVec$ and $\wmaxVec$ are already included in the interval $[\wminVec, \wmaxVec]$.
To prove the if\hh part, consider two scenarios with disturbance realizations $\wVec\sca\leq\wVec\scb$.
Both scenarios assume the same control actions over the prediction horizon and identical initial conditions, \ie,%
\begin{subequations}\label{eq:proofcostfacts}\begin{alignat}{2}
 x\sca\ok &= x\scb\ok, \quad & &\!\!\!\!\!\!\deltath\sca\ok = \deltath\scb\ok,\label{eq:proofcostfactsxk}\\
 \deltath\sca\okj &= \deltath\scb\okj\quad & &\forall j\inOneNp,\label{eq:proofcostfactsdeltathkj}\\
 u\sca\okj &= u\scb\okj\quad & &\forall j\inOneNp,\label{eq:proofcostfactsukj}\\
 w\sca\okj &\leq w\scb\okj\quad & &\forall j\inOneNp.\label{eq:proofcostfactswkj}
\end{alignat}\end{subequations}
From iteratively applying Lemma~\ref{lem:xscascblemma} follows%
\begin{equation}\label{eq:proofconstrrho}
 \rho\sca\okj \geq \rho\scb\okj\quad \forall\,\jInOneNp.
\end{equation}
To summarize, if the evolution of the other variables is fixed, we have $\wVec\sca\leq\wVec\scb\implies\rhoVec\sca\geq\rhoVec\scb$.
Now substitute $\wVec\sca=\wminVec$ and $\wVec\scb=\wVec$ and denote the respective $\rhoVec\sca=:\rhoVec_{\left|\wminVec\right.}$ and $\rhoVec\scb=:\rhoVec_{\left|\wVec\right.}$.
Consequently, $\wminVec\leq\wVec\implies\rhoVec_{\left|\wminVec\right.}\geq\rhoVec_{\left|\wVec\right.}$.
Combining this with the result of the substitution $\wVec\sca=\wVec$ and $\wVec\scb=\wmaxVec$ and defining $\rhoVec_{\left|\wmaxVec\right.}$ accordingly gives
\begin{equation}
 \wminVec\leq\wVec\leq\wmaxVec\implies\rhoVec_{\left|\wmaxVec\right.}\leq\rhoVec_{\left|\wVec\right.}\leq\rhoVec_{\left|\wminVec\right.}.
\end{equation}
The condition on the left side of this implication is obviously true, and so we can state that $\rhoVec_{\left|\wVec\right.}$ is bounded by $\rhoVec_{\left|\wmaxVec\right.}$ and $\rhoVec_{\left|\wminVec\right.}$.
If, in turn, each of these bounds is bounded by $\rholo$ and $\rhohi$, then $\rhoVec_{\left|\wVec\right.}$ is bounded by $\rholo$ and $\rhohi$.
This represents the if\hh part of \eqref{eq:feasibilityTractable} and completes the proof.
\end{proof}

\subsection{Proof of Theorem~\ref{thm:cost}}\label{sec:proofof:thm:cost}

\begin{proof}
Using the same matrix respectively vector definitions as before, we need to show that, for any given initial state $x'\ok$ and switch statuses $\deltath'\ok$ as well as control actions $\deltathVec'$ and $\uVec'$, solving the inner maximization problem in \eqref{eq:minimax} reduces to%
\begin{equation}\label{eq:worstcasecostcondition}
 \max_{\wVec\in[\wminVec,\wmaxVec]}
 {J\,}_{\left|\begin{smallmatrix*}[l]x\ok=x'\ok,\\ \deltath\ok=\deltath'\ok,\\ \deltathVec=\deltathVec',\\ \uVec=\uVec',\\ \wVec\end{smallmatrix*}\right.} =
 {J\,}_{\left|\begin{smallmatrix*}[l]x\ok=x'\ok,\\ \deltath\ok=\deltath'\ok,\\ \deltathVec=\deltathVec',\\ \uVec=\uVec',\\ \wVec=\wminVec\end{smallmatrix*}\right.}.
\end{equation}
Consider two scenarios with disturbance realizations $\wVec\sca\leq\wVec\scb$ as in \eqref{eq:proofcostfacts}.
From iteratively applying Lemma~\ref{lem:xscascblemma} follows%
\begin{subequations}\label{eq:proofcostpthx}\begin{alignat}{2}
 \pth\sca\okj &\geq \pth\scb\okj\quad & &\forall j\inOneNp,\\
 x\sca\okj &\leq x\scb\okj\quad & &\forall j\inOneNp.
\end{alignat}\end{subequations}
Note that in particular $x\sca(k+\Npr) \leq x\scb(k+\Npr)$.
With the storage dynamics \eqref{eq:storage:dynamics}, the cost in \eqref{eq:minimax} can be equivalently expressed as
\begin{multline}
 J = - \textstyle\frac{\Cst\T}{\Ts} (x(k+\Npr) - x\ok) + \\
 \textstyle\sum_{j = 1}^{\Npr} \left(\parenTwoLine{\Cth\T \ptkj + \CthOn\T \deltatkj +}
 {\CthSw\T \left|\deltatkj-\deltatkjm\right|}\,\right).
\end{multline}
With \eqref{eq:proofcostfactsdeltathkj}, \eqref{eq:proofcostfactsxk} and \eqref{eq:proofcostpthx}, and recalling that $\Cst$, $\Cth$ are positive, it follows that
\begin{equation}\label{eq:proofcost:Jscascb}
 J\sca \geq J\scb.
\end{equation}
To summarize, if the evolution of the other variables is fixed, we have $\wVec\sca\leq\wVec\scb\implies J\sca \geq J\scb$.
Now substitute $\wVec\sca=\wminVec$ and $\wVec\scb=\wVec$ and denote the respective $J\sca=:J_{\left|\wminVec\right.}$ and $J\scb=:J_{\left|\wVec\right.}$.
Consequently, $\wminVec\leq\wVec\implies J_{\left|\wminVec\right.}\geq J_{\left|\wVec\right.}$.
The condition on the left side of this implication is obviously true, and so we can state that $J_{\left|\wminVec\right.}$ is an upper bound for $J_{\left|\wVec\right.}$.
It is attained by inserting $\wVec=\wminVec$, and so $\wminVec$ maximizes $J_{\left|\wVec\right.}$.
\end{proof}



\bibliographystyle{IEEEtran}
\bibliography{IEEEabrv,bibliography}

\end{document}